\newcommand{\D}{\mathbb{D}}
\newcommand{\T}{\mathbb{T}}
\newcommand{\R}{\mathbb{R}}
\newcommand{\C}{\mathbb{C}}
\newcommand{\abf}[1]{\vartheta^{#1}}
\newcommand{\tay}{\boldsymbol{\sigma}}
\newcommand{\McC}{\raise.5ex\hbox{c}}
\newcounter{vremennyj}
\numberwithin{equation}{section}
\newtheorem{thm}{Theorem}[section]
\newtheorem{lm}[thm]{Lemma}
\newtheorem{cor}[thm]{Corollary}
\newtheorem{prop}[thm]{Proposition}
\newtheorem*{prop*}{Proposition}
\theoremstyle{remark}
\newtheorem{rem}[thm]{Remark}
\newtheorem*{rem*}{Remark}
\newtheorem{example}[thm]{Example}
\begin{document}

\title[]{Notes}

\author[Arora]{Palak Arora}
\email{palak.official94@gmail.com}

\author[Bickel]{Kelly Bickel$^\dagger$}
\address{Department of Mathematics, Bucknell University, 360 Olin Science Building, Lewisburg, PA 17837, USA.}
\email{kelly.bickel@bucknell.edu}
\thanks{$\dagger$ Research supported in part by National Science Foundation DMS grant \#2000088.}

\author[Liaw]{Constanze Liaw$^\ddagger$}
 \thanks{$^\ddagger$ Research supported in part by National Science Foundation DMS grant \#2452894.}
\address{Department of Mathematical Sciences, University of Delaware, 517A Ewing Hall, Newark, DE 19716, USA.}
\email{liaw@udel.edu}

\author[Sola]{Alan Sola}
\address{Department of Mathematics, Stockholm University, 106 91 Stockholm, Sweden}
\email{sola@math.su.se}

\subjclass[2020]{Primary 47A13, 47A20, 47A55}
\keywords{Commuting unitaries, Clark measures, Taylor spectrum.}

\title[Clark Unitaries and Taylor Joint Spectra]{Pairs of Clark Unitary Operators on the Bidisk and their Taylor Joint Spectra}
\date{\today}
\begin{abstract}
We develop a Clark theory for commuting compressed shift operators on model spaces $K_{\phi}$ associated with inner functions $\phi$ on the bidisk, which exhibits both similarities and marked differences compared to the classical one-variable version. We first identify the adjoint of the embedding operator $J_{\alpha} \colon K_{\phi}\to L^2(\sigma_{\alpha})$ as a weighted Cauchy transform of the Clark measure $\sigma_{\alpha}$. Under natural assumptions, which generically include the case when $\phi$ is rational inner,  we obtain commuting unitaries on $K_{\phi}$  that are (often infinite-dimensional) perturbations of the compressed shift operators $K_{\phi}$. We prove that these unitaries are unitarily equivalent to multiplication by the coordinate functions on $L^2(\sigma_\alpha)$ and then establish a number of related properties and simplified results in special cases. 
Finally, we show that the Taylor joint spectrum of these Clark unitaries coincides  with level sets of $\phi$  when $\phi$ is a rational inner function.
\end{abstract}
\maketitle

\section{Introduction and Overview}\label{sec:intro}
\subsection{Introduction}
The shift operator acting on functions in the Hardy space $H^2(\D)$ of the unit disk 
\[\D=\{z\in \C\colon |z|<1\}\]
is a fundamental example of an isometry. 
It is defined on holomorphic functions $f=\sum_{k=0}^{\infty}a_kz^k$ in the disk satisfying $\|f\|^2_{H^2(\D)}=\sum_k|a_k|^2<\infty$ 
as $T\colon f\mapsto z\cdot f$.
The shift operator has been studied extensively over the last century, and many important theorems regarding the Hardy shift have become classical results in operator theory (see, for instance, \cite[Chapters 3 and 4]{GMR}). An important example is a well-known theorem of Beurling which characterizes the invariant subspaces of the shift. Namely, $\mathcal{M}\subset H^2(\D)$ is a non-trivial closed invariant subspace for $T$ if and only if $\mathcal{M}=\phi H^2(\D)$ for some inner function $\phi \colon \D \to \C$. Recall that $\phi$ is inner if $\phi \in H^{\infty}(\D)$, the Banach algebra of bounded holomorphic functions in the disk, and its non-tangential limits satisfy $|\phi^*(\zeta)|=1$ for almost every $\zeta$ on the unit circle \[\T=\{z \in \C\colon |z|=1\}.\]

Another subspace of $H^2(\D)$ associated with an inner function $\phi$ is the model space
\[K_{\phi}=H^2(\D)\ominus \phi H^2(\D)=(\phi H^2(\D))^{\perp}.\]
Acting on each model space is the \emph{compressed shift} $S_{\phi}\colon K_{\phi}\to K_{\phi}$ defined as
\[S_{\phi}=P_{{\phi}}T,\]
where $P_{{\phi}} \colon H^2(\D)\to K_{\phi}$ denotes orthogonal projection. The `model' terminology arises from Sz\H{o}kefalvi-Nagy-Foia\c{s} theory, which realizes a wide class of Hilbert space contractions as compressed shifts on model spaces \cite[Chapter 9]{GMR}.
This fact provides a powerful motivation for studying compressions of the shift acting on model spaces, and also for investigating how properties of the underlying inner function $\phi$ are reflected in its operator theory. An important contribution in this direction was made by D.N. Clark, who observed \cite{Clark} that the rank-one perturbations of $S_{\phi}$ to a unitary operator form a one-parameter family $\{U_{\alpha}\}$ indexed by $\alpha \in \T$. 
He further identified the spectral measures of these unitaries as the positive singular Borel measures $\{\sigma_{\alpha}\}$ that satisfy
\begin{equation}
\frac{1-|\phi(z)|^2}{|\alpha-\phi(z)|^2}=\int_{\T}P_z (\zeta) d\sigma_{\alpha}(\zeta) 
\label{eq:clarkmeasuredef}
\end{equation}
where $P_z(\zeta) = (1-|z|^2)/|\zeta-z|^2$ denotes the Poisson kernel of the disk. These measures are often referred to as \emph{Clark measures }, and their properties are intimately connected, through a unitary map $J_{\alpha} \colon K_{\phi}\to L^2(\sigma_{\alpha})$, with both the function theory of $\phi$ and the spectral theory of $U_{\alpha}$. For instance, $\sigma_{\alpha}$ is the spectral measure of $U_{\alpha}$, and a point in $\mathrm{supp}\,\sigma_{\alpha} \subset \mathbb{T}$ is an eigenvalue of $U_{\alpha}$ precisely if $\phi$ has a finite angular derivative at the corresponding point of the unit circle. We refer the reader to \cite{cimaross,GMR} for accessible introductions to classical Clark theory, and to \cite{Saksman, PoltSara, FryL, LTreil} for a more varied accounting of recent progress (eg. including the case of non-inner $\phi$).

Hardy spaces in the unit disk naturally generalize to the bidisk
\[\D^2=\{(z_1,z_2)\in \C^2\colon |z_j|<1, j=1,2\}\]
and its distinguished boundary $\T^2=\{(\zeta_1,\zeta_2)\in \C^2 \colon |\zeta_j|=1, \,j=1,2\}$.
We say that a holomorphic function $f \colon \D^2\to \C$ belongs to $H^2(\D^2)$ if $f=\sum_{k=0}^{\infty}\sum_{l=0}^{\infty}a_{k,l}z_1^kz_2^l$ satisfies $\|f\|^2_{H^2(\D^2)}=\sum_{k,l}|a_{k,l}|^2<\infty$. Properties of Hardy spaces in the bidisk, and more generally the polydisk, are discussed in \cite{Rud}. In particular, $H^2(\D^2)$ can be identified via Cauchy integrals  with the subspace of $L^2(\T^2)$ whose elements have Fourier series with all nonnegative frequencies \cite[Chapter 3]{Rud}. On $H^2(\D^2)$ we have a pair of shift operators
\[T_1\colon H^2(\D^2)\to H^2(\D^2), \quad (T_1f)(z)=z_1 \cdot f(z_1,z_2)\]
and 
\[T_2\colon H^2(\D^2)\to H^2(\D^2), \quad (T_2f)(z)=z_2\cdot f(z_1,z_2).\]
If $\phi \colon \D^2\to \C$ is an inner function, that is, a bounded holomorphic function on the bidisk with unimodular non-tangential limits on $\T^2$, then the closed subspace $\phi H^2(\D^2)$ is jointly invariant for $(T_1,T_2)$. In stark contrast to the one-variable situation, this does not describe all invariant subspaces (a subspace of $H^2(\D^2)$ is of Beurling type when the shifts satisfy an additional doubly commuting condition \cite{Man}) and the operator theory in $H^2(\D^2)$ is much more complicated. Function theory in the bidisk is similarly complicated: even rational inner functions $\phi=q/p$ need not extend to be continuous on $\T^2$ and can in fact have intricate singularities \cite{BPSII}. Thus, one expects that the structure of two-variable model spaces 
\[K_{\phi}=H^2(\D^2)\ominus \phi H^2(\D^2)\]
and their associated compressed shifts
\begin{equation*}
S^j_{\phi} \colon K_{\phi} \to K_{\phi}, \quad S^j_{\phi}f=P_{\phi}T_jf,
\end{equation*}
(where $P_{{\phi}} \colon H^2(\D^2)\to K_{\phi}$ denotes orthogonal projection) should be fairly complicated, see eg. \cite{BickelLiaw,BhaDas} and references provided there. 

Nevertheless, there has been considerable recent progress on function theory in polydisks and associated operator theory. We do not give a comprehensive overview here, we merely focus on developments that are particularly relevant to the results in the present paper. Model spaces (also called Beurling quotient modules) have recently been characterized in terms of certain operator-theoretic conditions in \cite[Theorem 1.1]{BhaDas}.
Inspired by work of E. Doubtsov \cite{Doubt2020}, Clark measures on $\T^2$ associated with rational inner functions have been studied in great detail \cite{BickelSolaI, BickelSolaII, cal2}, and detailed structure formulas for these measures have been obtained. See \cite{cal1,BergqvistI, jac} for further extensions.

In this paper, we investigate generalizations of Clark theory to two variables by studying commuting Clark unitaries $(U^1_{\alpha}, U^2_{\alpha})$ obtained from compressions of the shifts $(T_1,T_2)$ to two-variable model spaces.\footnote{Terminology varies across the literature, but in this paper, we take `Clark unitary (operator)' to mean a perturbation of a compressed shift on a model space into a unitary operator. We will also encounter the unitary operators that embed model spaces in certain $L^2$ spaces; we will refer to such operators as `Clark embedding operators'. We caution that in two or more variables, these embedding operators are isometric but not always surjective!} Our goals include giving a description of such unitaries, investigating their properties, and showing that, at least in the case of rational inner function (RIF) $\phi$, the Taylor joint spectrum of the unitaries (defined below) is equal to the support 
of the Clark measure $\sigma_{\alpha}$, a one-dimensional subset of the two-torus. Since several methods for constructing RIFs with prescribed properties are known, our results produce a wide range of examples of sets that can arise as Taylor spectra for natural operators acting on spaces of holomorphic functions on the bidisk.

As is to be expected, there are several obstacles we need to overcome along the way. First is the realization that, in contrast to the classical one-dimensional case, we cannot expect finite rank perturbations to be sufficient to turn compressed shifts to unitaries. The perturbations we do obtain involve projections onto the model spaces, which are not always easy to compute explicitly in closed form. The structure of RIF Clark measures is also fairly complicated: these measures act by integrating along unions of smooth curves in $\T^2$ and weighting by non-constant weights obtained from partial derivatives of, in general, globally discontinuous RIFs. Finally, we need to connect the properties of these objects with the structure of the chain complex defining the Taylor joint spectrum.

We should emphasize that we focus on Clark theory in two variables in the setting of the bidisk; see for instance \cite{Jury14, JM18, AleksDoubt2020, AleksDoubt2023} for the setting of the unit ball. There are several reasons for restricting our attention to the case of two variables. On the one hand, the function theory of rational inner functions is more complicated in three or more variables, one manifestation being that the level sets of a RIF in higher dimensions cannot, in general, be parameterized by analytic or even continuous functions \cite{BPSIII}. Our understanding of Clark measures in higher-dimensional polydisks is less detailed (see however \cite{cal2}), with examples of pathological behavior, such as the measures having unbounded weight functions.
On the other hand, the formalism of the Koszul complex \cite{tay} used to define the Taylor joint spectrum becomes more involved as the number of variables increases and well-known operator theoretic obstacles such as the failure of And\^o's inequality \cite{Parrott,var} further support the investigations of pairs of commuting unitaries separately. 

\subsection{Overview}

Our paper is structured as follows. In Section \ref{sec:zwexample}, we study the simple example $\phi=z_1z_2$ and its associated Clark measures and Clark unitary operators. This example is simple enough to be handled from first principles, but the features we observe illustrate the general results that we obtain later on. 

In Section \ref{sec:prelim}, we present background material that we use freely in the remainder of the paper. We first review some salient results from Clark theory in one variable and then move to  a discussion about model spaces (including related Agler decompositions) and Clark measures in two variables, with a special emphasis on the case of rational inner functions. Finally, we recall the definition of Taylor joint spectrum and list some of the features of this notion of multivariate spectrum. 

In Section \ref{sec:unitaries}, we investigate the existence and properties of Clark operators on the bidisk. We start by considering the Clark embedding operators from the model space $K_\phi$ into the Clark space $L^2(\sigma_\alpha)$ and identifying their adjoints. Then in Theorem \ref{thm:U1alpha}, under further assumptions that the embedding operator is unitary and related functions $\abf{1}_\alpha, \abf{2}_\alpha$ are bounded, we prove that the operators $(U_\alpha^1, U_\alpha^2)$ defined on the associated model space by 
\[ 
\begin{aligned}
U^1_\alpha  &= S_\phi^1  + P_\phi P_{H^2_2} M_{\overline{\abf{1}_\alpha}} \\
U^2_\alpha  &= S_\phi^2  + P_\phi P_{H^2_1} M_{\overline{\abf{2}_\alpha}}, 
\end{aligned}
\]
give perturbations of the compressed shifts $(S_\phi^1, S_\phi^2)$ to commuting unitary operators. Here, $M_{\overline{\abf{j}_\alpha}}$ is multiplication by $\overline{\abf{j}_\alpha}$ on $L^2$ and  $P_{H^2_j}$ is the projection from $L^2$ onto a one-variable Hardy space. As in the one-variable situation, we prove that these Clark operators $(U_\alpha^1, U_\alpha^2)$ are unitarily equivalent to multiplication by the coordinate functions on $L^2(\sigma_{\alpha})$. 

The remainder of Section \ref{sec:unitaries} investigates special cases and properties of  $(U_\alpha^1, U_\alpha^2)$. For example, Corollary \ref{cor:crossformula} establishes very simple formulas for these operators in the setting where $\phi$ has a factor of $z_1 z_2.$ Meanwhile, Corollary \ref{cor:RIFUalpha} shows that the assumptions from Theorem \ref{thm:U1alpha} generically hold when $\phi$ is a rational inner function of two variables. Proposition \ref{prop:ptheta} shows that the projection $P_\phi$ is usually needed in the $U_j^\alpha$ formulas and Theorem \ref{thm:perturbissmall} shows that, while these perturbations of compresssed shifts to unitaries are often infinite-dimensional, they are still ``small'' in a way that naturally aligns with the known structure of the two-variable model spaces.  

Then, Section \ref{sec:spec} studies the Taylor joint spectrum of pairs of Clark unitaries. Under the assumption that $\phi$ is a two-variable rational inner function without a monomial factor, Theorem \ref{thm:levsetsasTaylorspec} shows that the Taylor joint spectrum of $(U_\alpha^1, U_\alpha^2)$ is the $\alpha$ level set of $\phi$ on $\mathbb{T}^2$. As part of this analysis, we exploit the precise structure of Clark measures of rational inner functions. 

In Section \ref{sec:examples}, we illustrate our results by applying them to both products of one-variable finite Blaschke products and a singular rational inner function. In both settings, our computations illustrate obstructions to finding tractable ways of explicitly computing $(U_\alpha^1, U_\alpha^2)$ when $\phi$ is not divisible by $z_1 z_2.$

\section{A Simple Example}\label{sec:zwexample}

Consider the rational inner function $\phi(z_1,z_2)=z_1z_2$, a product of two one-variable inner functions, which has $\phi(0,z_2)=\phi(z_1,0)=0$. This example is simple enough to analyze directly (with some minor assistance from \cite{BickelSolaI, BickelSolaII}), and illustrates the results obtained in this paper.

The associated model space $K_{\phi}=H^2(\mathbb{D}^2)\ominus \phi H^2(\mathbb{D}^2)$ consists of functions in $H^2(\mathbb{D}^2)$ whose power series expansions contain no terms $z_1^kz_2^l$ with $k$ and $l$ both positive. Note that we have the orthogonal decomposition
\[K_{\phi}=\overline{\mathrm{span}}\{z_1^k\colon k\geq 0\}\oplus \overline{\mathrm{span}}\{z_2^l \colon l\geq 1\}\]
which can be rewritten as
\[K_{\phi}=H^2_1\oplus z_2H^2_2 = H^2_2\oplus z_1H^2_1,\]
where for $j=1,2$, $H^2_j$ denotes the one-variable Hardy space $H^2(\D)$, embedded in $H^2(\D^2)$ with independent variable $z_j$.
We can readily describe the action of compressed shifts on powers of $z_j$:
\[(S_{\phi}^1z_1^k)(z)=(P_{{\phi}}T_1z_1^k)(z)=(P_{{\phi}}z_1^{k+1})(z)=z_1^{k+1}, \quad k=0,1,\ldots, \]
and 
\[(S_{\phi}^1z_2^l)(z)=(P_{{\phi}}T_1z_2^l)(z)=(P_{{\phi}}z_1z_2^l)(z)=0, \quad l=1,2,\ldots.\]
Similarly, we obtain
\[S_{\phi}^2z_1^k=0, \quad k=1, 2\ldots,\quad \textrm{and} \quad S_{\phi}^2z_2^l=z_2^{l+1}, \quad l=0, 1,2,\ldots.\]
This means that $S^1_{\phi}$ maps the subspace $H^2_1\subseteq K_{\phi}$ into itself and $z_2H^2_2$ onto $\{0\}$; an analogous statement holds for $S^2_{\phi}$. From this, it is clear that, in contrast to the classical one-variable setting, there are no finite-rank perturbations of $S^1_{\phi}$ and $S^2_{\phi}$ that result in unitary operators on $K_{\phi}$. For instance, note that $H^2_2 \subseteq K_\phi$ and $H^2_2 \perp \mathrm{Ran}S^1_{\phi}$. 

However, if we allow infinite-rank perturbations we can write down a concrete representation of unitaries associated with the compressed shifts. Namely, let $\alpha \in \mathbb{T}$ and consider the operators $R_{\alpha}^k\colon K_{\phi}\to K_{\phi}$, $k=1,2$ defined on powers of $z_j$ by $R_{\alpha}^1(z_1^k)=R_{\alpha}^2(z_2^l)=0$ for all $k, l \ge 0$ and 
\[R_{\alpha}^1(z_2^l)=\alpha z_2^{l-1}, \quad l=1,2,\ldots 
\qquad \textrm{and}\quad R_{\alpha}^2(z_1^k)=\alpha z_1^{k-1}, \quad k=1,2,\ldots.\] 
A more compact expression for the $R_{\alpha}^j$ operators, amenable to generalization, is 
\[R_{\alpha}^{1}
=\alpha P_{H_2^2} M_{\bar{\phi}/\bar{z_1}} \ \text{ and } \ \ R_{\alpha}^{2}
=\alpha P_{H^2_1} M_{\bar{\phi}/\bar{z_2}}\]
where $P_{H^2_j}$ is the orthogonal projection from $L^2$ onto $H^2_j$ and $M_{\bar{\phi}/\bar{z_j}}$ is multiplication by the bounded function $\bar{\phi}/\bar{z_j}$ on $L^2$.
Now consider the operators $U^j_{\alpha}\colon K_{\phi}\to K_{\phi}$, $j=1,2$, defined by
\[U^1_{\alpha}=S^1_{\phi}+R^1_{\alpha} \quad \textrm{and}\quad U^2_{\alpha}=S^2_{\phi}+R^2_{\alpha}.\]
Then if $f(z) = f_1(z_1) + z_2 f_2(z_2) \in K_\phi$ for $f_1 \in H^2_1, f_2, \in H^2_2,$ then
\[ (U^1_{\alpha} f)(z) = z_1 f_1(z_1) + \alpha f_2(z_2), \]
and a similar formula holds for $U^2_\alpha.$
From this formula, one can see that $U^1_\alpha$ is unitary and with direct computations, one can easily check that the pair of unitaries 
$(U^1_{\alpha}, U^2_{\alpha})$ commute. 
 Note that the splitting $K_{\phi}=H^2_1 \oplus z_2H^2_2$ is not preserved. For instance, we have $U_{\alpha}^1(z_2)=\alpha\in H_1^2$ while $z_2 \in z_2H^2_2$; this is because $R_{\alpha}^1$ maps $z_2\in z_2H^2_2$ to $\alpha \in H^2_1$. These formulas for $R_\alpha^j$ and $U_\alpha^j$ align with the more general perturbations of compressed shifts to unitaries that will appear later in Theorem \ref{thm:U1alpha}  and Corollary \ref{cor:crossformula}.

\begin{figure} 
\includegraphics[width=0.45 \textwidth]{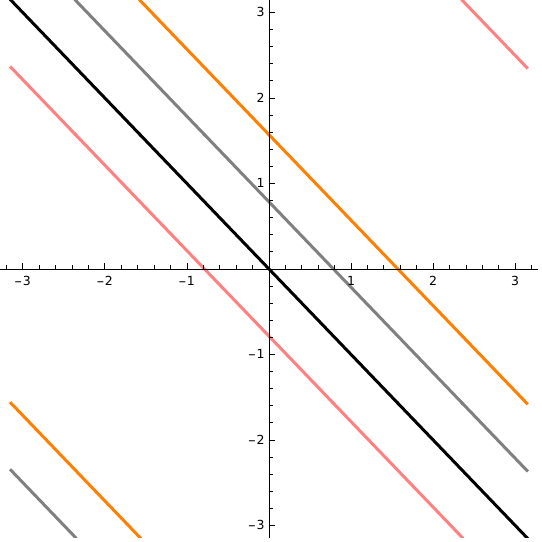}
\caption{Support sets  in $\mathbb{T}^2\simeq [-\pi, \pi)^2$ for the Clark measures $\sigma_{\alpha}$ associated with $\phi=z_1z_2$ and $\alpha=1$ (black), $\alpha=e^{i\frac{\pi}{4}}$ (gray), $\alpha=e^{i\frac{\pi}{2}}$ (orange), and $\alpha=e^{-i\frac{\pi}{4}}$ (pink).}
\label{fig:zwsupport1}
\end{figure}

The family $\{\sigma_{\alpha}\}_{\alpha\in \mathbb{T}}$ of Clark measures associated with $\phi$ (formally defined in Subsection \ref{subsec:RIFClark}) can readily be identified, see e.g. \cite[Example 4.2]{Doubt2020}. Namely, we have the pairing
\[\int_{\mathbb{T}^2}f (\zeta) d\sigma_{\alpha}(\zeta) =\int_{\mathbb{T}}f(\zeta_1, \alpha \bar{\zeta}_1)|d\zeta_1|, \quad f\in C(\mathbb{T}^2).\]
This shows $\mathrm{supp}\,\sigma_{\alpha}=\{(\zeta_1, \alpha \bar{\zeta}_1)\colon \zeta_1 \in \mathbb{T}\}$. A collection of these support sets is graphed in Figure \ref{fig:zwsupport1}.

As is explained in \cite{Doubt2020}, there is a surjective isometry $J_{\alpha}\colon K_{\phi}\to L^2(\sigma_{\alpha})$ for each value of $\alpha$. By \cite[Theorem 1.3]{BickelSolaI}, this isometry acts by evaluation of $f\in K_{\phi}$ on $\mathrm{supp}\,\sigma_{\alpha}$, which is well-defined for $\sigma_{\alpha}$-a.e. $\zeta\in \T^2$. (In this example, one could also check directly that this evaluation map gives a unitary operator from $K_{\phi}\to L^2(\sigma_{\alpha})$.) Then, for almost every $ \zeta \in \mathrm{supp}\,\sigma_{\alpha}$ and each power of $z_j,$
\[
\begin{aligned} (J_{\alpha}U^1_{\alpha}z_1^k)(\zeta) &=\zeta_1^{k+1} = \zeta_1 \cdot \zeta_1^k  = (M_{\zeta_1} J_\alpha z_1^k)(\zeta)   \\(J_{\alpha}U^1_{\alpha}z_2^l)(\zeta) &= \alpha (\alpha \bar{\zeta_1})^{l-1} = 
\zeta_1 \cdot (\alpha \bar{\zeta}_1)^{l} = (M_{\zeta_1} J_\alpha z_2^l)(\zeta), \end{aligned} \]
so that $J_{\alpha}U^1_{\alpha}=M_{\zeta_1} J_\alpha$, where $M_{\zeta_1}\colon L^2(\sigma_{\alpha})\to L^2(\sigma_{\alpha})$ is the unitary furnished by multiplication by the coordinate function $\zeta_1$. Similarly, 
\[(J_{\alpha}U^2_{\alpha})(z_1^k)=\alpha \bar{\zeta}_1 \cdot \zeta_1^k \quad \textrm{and} \quad (J_{\alpha}U^2_{\alpha})(z_2^l)=\alpha \bar{\zeta}_1 \cdot (\alpha \bar{\zeta}_1)^{l},\]
which implies that $J_{\alpha}U^2_{\alpha}=M_{\alpha \bar{\zeta}_1} J_\alpha =M_{\zeta_2} J_\alpha$.

Finally, one can check directly (or using Theorem \ref{thm:jointspectrum} below) that the Taylor joint spectrum of $(U^1_{\alpha}, U^2_{\alpha})$ (see Subsection \ref{subsec:Tspec} for a definition) is
\[\tay(U^1_{\alpha}, U^2_{\alpha})=\{(\zeta_1, \alpha \bar{\zeta}_1)\colon\zeta_1 \in \mathbb{T}\},\]
so that each pair of unitaries has Taylor spectrum equal to a straight line in the torus. Finally, note that $\bigcup_{\alpha \in \mathbb{T}}\tay(U^1_{\alpha}, U^2_{\alpha})=\mathbb{T}^2$ as a disjoint union.

\section{Preliminaries}\label{sec:prelim}
We review some basic facts from classical Clark theory in the unit disk, as well as more recent work concerning rational inner functions in two variables and their Clark measures. Then, we review the definition and the basic properties of the Taylor joint spectrum of a pair of commuting operators.
\subsection{Clark Theory in One Variable}\label{subsec:1dClark}
For comprehensive introductions to Clark theory, we refer the reader to \cite{cimaross, GMR, Saksman} and their lists of references. 

Recall that each inner function $\phi\colon \mathbb{D}\to \mathbb{C}$ gives rise to a family of Clark measures $\{\sigma_{\alpha}\}_{\alpha \in \mathbb{T}}$ via \eqref{eq:clarkmeasuredef}; each $\sigma_{\alpha}$ is singular with respect to Lebesgue measure on the unit circle, and moreover $\sigma_{\alpha_1}\perp
 \sigma_{\alpha_2}$ for distinct $\alpha_j$. Clark measures can be identified as spectral measures associated with certain unitary operators acting on the model space $K_{\phi}=H^2(\mathbb{D})\ominus \phi H^2(\mathbb{D})$. Namely, while the compressed shift $S_{\phi}\colon K_{\phi} \to K_{\phi}$ is not generally unitary (see eg. \cite[Chapter 9]{GMR}), they do admit one-dimensional perturbations to unitaries. More specifically, 
for each $\alpha \in \mathbb{T}$, one can define the unitary operator $U_\alpha$ by
\begin{equation}\label{eq:clarkUdef}
U_{\alpha}f:=S_{\phi}f+\frac{\alpha}{1-\overline{\phi(0)}\alpha} \langle f, Ck^\phi_0\rangle k^\phi_0, \quad f\in K_{\phi},
\end{equation}
where $k^\phi_0$ is the reproducing kernel for $K_{\phi}$ at the origin, and $Ck^\phi_0$ is its conjugation. If $\phi(0)=0$, then $k^\phi_0=1$ and \eqref{eq:clarkUdef} simplifies to
\[U_{\alpha}f=S_{\phi}f+\alpha \langle f,T^*\phi\rangle 1,\]
where $T^*$ denotes the backwards shift.

Define the surjective isometry $J_{\alpha}\colon K_{\phi}\to L^2(\sigma_{\alpha})$ by setting \[(J_{\alpha}k^{\phi}_{w})(z)=\frac{1-\alpha\overline{\phi(w)}}{1-\bar{w}z}\] 
for the reproducing kernel functions
\[k^{\phi}_{w}(z)=\frac{1-\overline{\phi(w)}\phi(z)}{1-\bar{w}z}, \quad z\in \mathbb{D},\]
and then extending to $K_{\phi}$ by linearity. 
Then the following theorem is well known. 
\begin{thm}
For each $\alpha$, the operator $U_{\alpha}$ acting on the model space $K_{\phi}$ is unitarily equivalent under $J_{\alpha}$ to the operator $M_{\zeta}$ on the spectral representation $L^2(\sigma_{\alpha})$.
\end{thm}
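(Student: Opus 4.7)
The plan is to verify the intertwining identity $J_\alpha U_\alpha = M_\zeta J_\alpha$ on the total set of reproducing kernels $\{k^\phi_w : w \in \mathbb{D}\}\subset K_\phi$. Since $J_\alpha$ is already given as a surjective isometry, a successful verification on this dense set upgrades to the asserted unitary equivalence (and automatically certifies that $U_\alpha$ is unitary, via $U_\alpha = J_\alpha^* M_\zeta J_\alpha$).

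Three ingredients are needed. First, a direct computation of $S_\phi k^\phi_w$: using the splitting $\tfrac{z}{1-\bar w z} = \tfrac{1}{\bar w}\bigl(\tfrac{1}{1-\bar w z} - 1\bigr)$ for $w\neq 0$ and decomposing $z k^\phi_w$ into its $K_\phi$ and $\phi H^2(\mathbb{D})$ parts, where the constant function projects onto $k_0^\phi$ by the reproducing property, one obtains the standard identity $S_\phi k^\phi_w = \bar w^{-1}(k^\phi_w - k_0^\phi)$. Second, an evaluation of $\langle k^\phi_w, C k_0^\phi\rangle$: the conjugation on $K_\phi$ acts on boundary values by $(Cf)(\zeta)=\bar\zeta\,\phi(\zeta)\,\overline{f(\zeta)}$, so applying it to $k_0^\phi(z) = 1-\overline{\phi(0)}\phi(z)$ and using $|\phi(\zeta)|=1$ a.e.\ on $\mathbb{T}$ shows that $Ck_0^\phi$ extends analytically as the difference quotient $(\phi(z)-\phi(0))/z$. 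The reproducing property then yields $\langle k^\phi_w, Ck_0^\phi\rangle = \overline{Ck_0^\phi(w)} = (\overline{\phi(w)}-\overline{\phi(0)})/\bar w$.

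Substituting these two computations into \eqref{eq:clarkUdef}, the coefficient of $k_0^\phi$ telescopes algebraically to give $U_\alpha k^\phi_w = \bar w^{-1} k^\phi_w - \bar w^{-1}\cdot\frac{1-\alpha\overline{\phi(w)}}{1-\overline{\phi(0)}\alpha}\,k_0^\phi$. Applying $J_\alpha$ via the defining formulas $J_\alpha k^\phi_w(\zeta) = (1-\alpha\overline{\phi(w)})/(1-\bar w\zeta)$ and $J_\alpha k_0^\phi(\zeta) = 1-\alpha\overline{\phi(0)}$, the crucial observation is that $(1-\alpha\overline{\phi(0)})/(1-\overline{\phi(0)}\alpha) = 1$, so the two summands collapse to $\bar w^{-1}(1-\alpha\overline{\phi(w)})\bigl[\tfrac{1}{1-\bar w\zeta}-1\bigr] = \zeta\cdot(1-\alpha\overline{\phi(w)})/(1-\bar w\zeta)$, which is precisely $M_\zeta J_\alpha k^\phi_w(\zeta)$. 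The case $w=0$ is recovered by passing to the limit $w\to 0$, since everything involved depends continuously (in fact analytically) on $\bar w$.

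The main obstacle is essentially bookkeeping: recognizing that the seemingly asymmetric normalizer $\alpha/(1-\overline{\phi(0)}\alpha)$ in \eqref{eq:clarkUdef} is exactly calibrated to produce the cancellation $1-\alpha\overline{\phi(0)} = 1-\overline{\phi(0)}\alpha$ that makes the kernel $k_0^\phi$-piece line up with the $-1$ arising from $\tfrac{\zeta}{1-\bar w\zeta}$. In the simplified case $\phi(0)=0$, this is transparent: $k_0^\phi\equiv 1$, $Ck_0^\phi = \phi/z = T^*\phi$, and the entire identity can be read off almost by inspection, providing a useful sanity check for the general case.
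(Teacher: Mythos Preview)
Your argument is correct. The paper itself does not give a proof of this theorem; it is stated as background from classical one-variable Clark theory, with the sentence ``See \cite[Chapter 8, Chapter 11]{GMR} for a detailed exposition, including proofs of the above statements'' immediately following. So there is no in-paper proof to compare against.

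That said, your approach---verifying the intertwining relation $J_\alpha U_\alpha = M_\zeta J_\alpha$ on the total set of reproducing kernels $k^\phi_w$---is precisely the standard one found in the cited reference. The three ingredients you isolate (the formula $S_\phi k^\phi_w = \bar w^{-1}(k^\phi_w - k_0^\phi)$, the identification $Ck_0^\phi(z) = (\phi(z)-\phi(0))/z$, and the algebraic collapse driven by $1-\alpha\overline{\phi(0)} = 1-\overline{\phi(0)}\alpha$) are exactly the computations that appear in textbook treatments. Your handling of the $w=0$ case by continuity is also fine, though one could equally well compute it directly: $U_\alpha k_0^\phi = \tfrac{\alpha(1-|\phi(0)|^2)}{1-\overline{\phi(0)}\alpha}k_0^\phi$ and $J_\alpha$ sends this to $\alpha(1-|\phi(0)|^2)\tfrac{1-\alpha\overline{\phi(0)}}{1-\overline{\phi(0)}\alpha}$, which one checks equals $\zeta \cdot (1-\alpha\overline{\phi(0)})$ for $\sigma_\alpha$-a.e.\ $\zeta$ using the fact that $\sigma_\alpha$ is supported where $\phi^*=\alpha$ (or, more cleanly, by your limiting argument).
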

See \cite[Chapter 8, Chapter 11]{GMR} for a detailed exposition, including proofs of the above statements.  

Much of this paper is concerned with rational inner functions. This class generalizes finite Blaschke products in one variable, whose Clark theory is well understood.
\begin{example}
Let $\Lambda=\{\lambda_1,\ldots, \lambda_n\}\subset \mathbb{D}$ and let
\[\phi(z)=e^{ia}z^m\prod_{j=1}^{n}b_{\lambda_j}(z), \quad b_{\lambda_j}(z)=\frac{|\lambda_j|}{\lambda_j}\frac{\lambda_j-z}{1-\overline{\lambda_j}z},\] 
be a finite Blaschke product with possibly repeated zeros at $\Lambda \cup \{0\}$. Note that each such $\phi$ extends to be analytic past $\T$.

Then the associated model space $K_{\phi}$ has $\dim K_{\phi}=\deg \phi$ (see \cite[Proposition 5.16]{GMR}), hence is finite-dimensional, and is spanned by reproducing kernels and their derivatives. In fact, $K_{\phi}$ is finite dimensional precisely when $\phi$ is a finite Blaschke product.

The Clark measures associated with a finite Blaschke product admit the explicit representation 
\begin{equation}
\sigma_{\alpha}=\sum_{\zeta \in \mathcal{C}_{\alpha}}\frac{1}{|\phi'(\zeta)|}\delta_{\zeta},
\label{eq:clarkmeasblaschke}
\end{equation}
where $\delta_{\zeta}$ denotes point evaluation at $\zeta$ and $\mathcal{C}_{\alpha}=\{\zeta \in \mathbb{T}\colon \phi(\zeta)=\alpha\}$ is a unimodular level set of $\phi$; see \cite[Proposition 11.2]{GMR}.
The expression in \eqref{eq:clarkmeasblaschke} is a finite sum since $\phi$ is a finite cover of $\mathbb{D}$, and in particular $\#\mathcal{C}_{\alpha}=\deg \phi$. The elements of $\mathcal{C}_{\alpha}$ are the eigenvalues of $U_{\alpha}$ and, in particular, the spectrum of the unitary $U_{\alpha}$ coincides precisely with the support of $\sigma_{\alpha}$, so that $\sigma(U_{\alpha})=\{\zeta \in \mathbb{T}\colon \phi(\zeta)=\alpha\}$.
\end{example}

\subsection{Clark Measures for Two-Variable Inner Functions}\label{subsec:RIFClark}
We refer the reader to \cite{Rud,BPSII,BKPS} for background material concerning rational inner functions. Discussions of their associated Clark measures can be found in \cite{Doubt2020,BickelSolaI,BickelSolaII, BergqvistII}. More general results have recently been obtained in \cite{cal1,cal2}.

A function $\phi \in H^{\infty}(\D^2)$ is said to be inner if its non-tangential limit $\phi^*(\zeta)=\angle \lim_{z\to \zeta}\phi(z)$, which exists a.e. \cite[Theorem 3.3.5]{Rud}, satisfies that $|\phi^*(\zeta)|=1$ at almost every $\T^2$. We say that $\phi$ is a rational inner function if $\phi=q/p$ for $q,p\in \C[z_1, z_2]$, where $Z_p\cap \D^2=\emptyset$, and $q$ and $p$ are relatively prime. Polynomials with no zeros in $\D^2$ are called \emph{stable}; note that $Z_p\cap \T^2\neq \emptyset$ is not ruled out, and in fact the case of boundary zeros is of considerable interest.
One can show (see \cite[Theorem 5.2.5]{Rud}) that $\phi$ is a rational inner function in $\D^2$ if and only if
\[\phi(z)=e^{ia}z^m\frac{\tilde{p}(z)}{p(z)},\]
where $a\in \R$, $m=(m_1,m_2)$ is a multi-index, and $p$ is a stable polynomial. The \emph{reflection} of $p$ is defined as
\[\tilde{p}(z)=z_1^{n_1}z_2^{n_2}\overline{p\left(\frac{1}{\bar{z}_1}, \frac{1}{\bar{z}_2}\right)},\]
with $(n_1,n_2)$, the bidegree of $p$, given by $n_j=\deg_{z_j}p$ for $j=1,2$. 

Let us assume that $p$ is \emph{atoral}, meaning in our context that $p$ and its reflection polynomial do not share a common factor (see \cite{AMS} for a general discussion).  If $p$ is stable and atoral, then $p$ also has no zeros on the faces of the bidisk $\mathbb{D}\times \mathbb{T}$ and $\mathbb{T}\times \mathbb{D}$, see \cite[Lemma 10.1]{Kne15}.  In contrast to the one-variable setting, a rational inner function (RIF) in two variables can have singularities on $\T^2$: these occur at common zeros in $Z_p\cap Z_{\tilde{p}}\cap \T^2$. By a theorem of Knese (\cite[Theorem C]{Kne15}; see also \cite[Theorem 3.2]{BKPS} for an extension to general bounded rational functions), any RIF has non-tangential limits at \emph{every} point of $\zeta \in \T^2$, and these non-tangential limits satisfy $\phi^*(\zeta) \in \T$. It should be noted that the function $\zeta \mapsto \phi^*(\zeta)$ is discontinuous on $\T^2$ whenever $Z_p \cap \T^2 \neq \emptyset$ (see for instance \cite{PasBLMS}).

The \emph{model space} associated with an inner function $\phi$ is defined as 
\[K_{\phi}=H^2(\D^2)\ominus \phi H^2(\D^2).\]
The structure of $K_{\phi}$ in two variables is in general more complicated than in one variable, even if $\phi$ is rational inner; see \cite{BickelLiaw} and the references listed there. 
Associated with these spaces are the Szeg\H{o} kernel for the bidisk and the associated reproducing kernel
\[
\begin{aligned}
k_w(z) &= \frac{1}{(1-z_1 \bar{w}_1)(1-z_2 \bar{w}_2)},  \\
k^\phi_w(z) &=  \frac{1-\phi(z) \overline{\phi(w)}}{(1-z_1 \bar{w}_1)(1-z_2 \bar{w}_2)}.  
\end{aligned}
\]
so that $H^2(\mathbb{D}^2) = \mathcal{H}(k_w(z))$ and $K_{\phi}  = \mathcal{H}( k^{\phi}_w(z) )$ as reproducing kernel Hilbert spaces. Finally, recall that the compressions $S^j_{\phi}\colon  K_{\phi}\to K_{\phi}$ of the shifts $(T_1,T_2)$ are defined by
\begin{equation}\label{def:comprshifts}
(S^j_{\phi}f)(z)=(P_{{\phi}}T_j f)(z), \quad z\in \D^2,
\end{equation}
where $P_{{\phi}}\colon H^2(\D^2)\to K_{\phi}$ denotes the orthogonal projection of $H^2(\D^2)$ onto its closed subspace $(\phi H^2(\D^2))^{\perp}$. As is proved in \cite[Theorem 1.1]{BhaDas}, a closed subspace $\mathcal{Q}\subset H^2(\mathbb{D}^2)$ invariant under $(T_1^*, T_2^*)$ is in fact of the form $\mathcal{Q}=K_{\phi}$ for some inner function $\phi$ precisely if
\[(I-(P_{\mathcal{Q}}T_j)^*P_{\mathcal{Q}}T_j)(I-P_{\mathcal{Q}}T_k(P_{\mathcal{Q}}T_k)^*)=0, \quad j\neq k,\]
where $I$ is the identity operator and $P_{\mathcal{Q}}\colon H^2(\mathbb{D}^2)\to \mathcal{Q}$ denotes orthogonal projection; this result remains valid in $d$ variables. Bhattacharjee, Das, Debnath, and Sarkar further provide a characterization of $d$-tuples of commuting operators which are unitarily equivalent to compressions of the shift to a model space \cite[Theorem 3.2]{BhaDas}. Thus, many of our results apply to pairs of operators covered by this theorem.
 
We turn to a discussion of Clark measures associated with a rational inner function $\phi$. For a fixed $\alpha \in \T$, define the \emph{unimodular level set} 
\begin{align}\label{e-Calpha}\mathcal{C}_{\alpha}=\overline{\{\zeta \in \T^2\colon \phi^{\ast}(\zeta)=\alpha\}}.\end{align}
As is explained in \cite{BPSII},  when $\phi = \tfrac{\tilde{p}}{p}$,  we have the identification $\mathcal{C}_{\alpha}=\{\zeta \in \T^2\colon \tilde{p}(\zeta)-\alpha p(\zeta)=0\}$. A fortiori, the level set $\mathcal{C}_{\alpha}$ can be parameterized by analytic functions \cite[Theorem 2.8]{BPSII} (see also the discussion in \cite[Section 2]{BickelSolaII}, and in particular Lemma 2.5 therein).  

Clark measures on the bidisk are defined in a similar manner as for the unit disk. If $\phi \colon \D^2\to \C$ is an inner function, then for each $\alpha\in \T$
\[\frac{1-|\phi(z)|^2}{|\alpha-\phi(z)|^2}=\Re\left(\frac{\alpha+\phi(z)}{\alpha-\phi(z)}\right), \quad z\in \D^2\]
is a positive pluriharmonic function, and hence
\[\frac{1-|\phi(z)|^2}{|\alpha-\phi(z)|^2}=\int_{\T^2}P_{z_1}(\zeta_1)P_{z_2}(\zeta_2)d\sigma_{\alpha}(\zeta)\]
for some positive singular Borel measure $\sigma_\alpha$ on $\T^2$ and for one-variable Poisson kernels $P_{z_j}$. These measures $\{\sigma_{\alpha}\}$ are the Clark measures associated with $\phi$. Some basic properties of Clark measures in polydisks were established by Doubtsov in \cite{Doubt2020}; for instance, we have $\sigma_{\alpha_1}\perp \sigma_{\alpha_2}$ whenever $\alpha_1\neq\alpha_2$, as in one variable, and an analog of the Aleksandrov disintegration theorem holds for $\{\sigma_{\alpha}\}$ (see \cite[Proposition 2.4]{Doubt2020}). 

These measures admit a particularly concrete representation for $\phi$ rational inner. For each fixed $\zeta_1 \in \T$, the function $z_2\mapsto \phi(\zeta_1, z_2)$ is a finite Blaschke product, with degree at most $n_2=\deg_{z_2}p$. The formula \eqref{eq:clarkmeasblaschke} describes the Clark measures associated with a finite Blascke product, and from this one might hope to obtain a corresponding formula for two-variable RIFs by slicing. This is indeed possible, and is one of the main results of \cite{BickelSolaII}. Before we formulate this as a theorem, we need to introduce the notion of exceptional $\alpha \in \T$. We say that $\alpha\in \T$ is \emph{generic} if $\mathcal{C}_{\alpha}$ does not contain any sets of the form $\{(\zeta, \eta)\colon \zeta \in \T\}$ or $\{(\eta, \zeta)\colon \zeta \in \T\}$ for some $\eta \in \T$. In other words, $\alpha$ is generic for $\phi$ if the corresponding level set does not contain any horizontal or vertical lines. If $\alpha$ fails to be generic, then we say that $\alpha$ is \emph{exceptional}. Note that a RIF  of two variables  can have at most a finite number of exceptional $\alpha$ (see \cite{BickelSolaII}).

\begin{thm}[\cite{BickelSolaII}, Theorem 3.3]\label{theorem:RIFclarkmeas}
Let $\phi=\frac{\tilde{p}}{p}$ be a bidegree $(n_1,n_2)$ RIF with $n_1,n_2\geq 1$,  and let $\alpha \in \T$ be generic for $\phi$. Then for $f\in C(\T^2)$, the associated Clark measure $\sigma_{\alpha}$ satisfies
\[\int_{\T^2}f(\zeta)d\sigma_{\alpha}(\zeta)=\sum_{j=1}^{n_{2}}\int_{\T}f(\xi, g_j^{\alpha}(\xi))\frac{dm(\xi)}{|\frac{\partial \phi}{\partial z_2}(\xi, g^{\alpha}_j(\xi))|},\]
where $g^{\alpha}_1,\ldots, g^{\alpha}_{n_2}$ are analytic functions parameterizing $\mathcal{C}_{\alpha}$.
\end{thm}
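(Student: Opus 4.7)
The plan is to reduce the two-variable Clark measure computation to a parameterized family of one-variable Clark measures on slices of $\phi$, combined with a disintegration of $\sigma_\alpha$ in the first coordinate. For $\xi_1 \in \T$ outside a finite exceptional set (avoiding common zeros of $p,\tilde p$ and roots of the resultant $\mathrm{Res}_{z_2}(p,\tilde p)$), the slice $z_2 \mapsto \phi(\xi_1,z_2)=\tilde p(\xi_1,z_2)/p(\xi_1,z_2)$ is a rational function on $\overline{\D}$ with unimodular boundary values, hence a finite Blaschke product in $z_2$ of degree exactly $n_2$. Applying the one-variable formula \eqref{eq:clarkmeasblaschke} to this Blaschke product identifies its Clark measure at level $\alpha$ as
\[
\sigma_\alpha^{\xi_1} \;=\; \sum_{\eta:\,\phi(\xi_1,\eta)=\alpha}\frac{1}{|\partial_{z_2}\phi(\xi_1,\eta)|}\,\delta_\eta,
\]
whose $z_2$-Poisson extension equals $(1-|\phi(\xi_1,z_2)|^2)/|\alpha-\phi(\xi_1,z_2)|^2$.

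The key step is to establish the disintegration
\[
d\sigma_\alpha(\zeta_1,\zeta_2)\;=\;d\sigma_\alpha^{\zeta_1}(\zeta_2)\,dm(\zeta_1).
\]
I would derive this by taking non-tangential boundary values in $z_1$ in the defining two-variable Poisson identity for $\sigma_\alpha$: for a.e.\ $\xi_1\in\T$, the left-hand side $(1-|\phi(z_1,z_2)|^2)/|\alpha-\phi(z_1,z_2)|^2$ converges to the $z_2$-Poisson extension of $\sigma_\alpha^{\xi_1}$. Simultaneously, Fubini writes the two-variable Poisson integral as an iterated one, and as $z_1\to\xi_1$ the $z_2$-slice of $\sigma_\alpha$ at $\zeta_1=\xi_1$ appears as the boundary measure of the inner integral. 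Uniqueness of Poisson representations of positive measures on $\T$ then forces the slice of $\sigma_\alpha$ to coincide with $\sigma_\alpha^{\xi_1}$ for $m$-a.e.\ $\xi_1$.

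To conclude, for generic $\alpha$ I invoke \cite[Theorem 2.8]{BPSII} to obtain analytic functions $g_1^\alpha,\ldots,g_{n_2}^\alpha\colon\T\to\T$ whose values enumerate the solutions $\eta$ of $\phi(\xi_1,\eta)=\alpha$ for a.e.\ $\xi_1$. Substituting into the disintegration and applying Fubini gives the claimed identity. The main obstacle is the rigorous justification of the disintegration: although intuitive, converting the pointwise non-tangential convergence of the pluriharmonic kernel in $z_1$ into an equality of measures on $\T$ requires careful use of the approximate-identity properties of the Poisson kernel together with uniqueness of boundary representations. A secondary technical point is handling the finitely many exceptional $\xi_1\in\T$ where $\partial_{z_2}\phi$ vanishes, or where $p(\xi_1,\cdot)$ has $\T$-zeros in $z_2$; by the genericity of $\alpha$ these form an $m$-null set and therefore do not affect the final integral.
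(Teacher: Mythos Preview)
The paper does not prove this theorem; it is quoted from \cite{BickelSolaII} as background, so there is no in-paper proof to compare against. Your strategy---slice in $z_1$, apply the one-variable Blaschke formula \eqref{eq:clarkmeasblaschke} on each slice, then reassemble via the level-curve parameterization from \cite{BPSII}---is the approach of \cite{BickelSolaII}, and you correctly identify the disintegration as the crux.

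That said, your proposed justification of the disintegration is awkward in the direction you have chosen. Writing the two-variable Poisson integral as an iterated one so that ``the $z_2$-slice of $\sigma_\alpha$ at $\zeta_1=\xi_1$ appears as the boundary measure of the inner integral'' presupposes both that such slices are well defined and that the $\zeta_1$-marginal of $\sigma_\alpha$ is $m$; the latter is part of what the theorem asserts. Moreover, passing to non-tangential boundary values in $z_1$ recovers only the absolutely continuous part of the $\zeta_1$-pushforward (Fatou), so a separate argument is still needed to rule out a singular component.

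The clean fix, and the route taken in \cite{BickelSolaII}, is to reverse the logic. Define a candidate measure $\nu_\alpha$ on $\T^2$ directly by the right-hand side, i.e.
\[
\int_{\T^2} f\,d\nu_\alpha \;=\; \int_\T\!\int_\T f(\xi_1,\zeta_2)\,d\sigma_\alpha^{\xi_1}(\zeta_2)\,dm(\xi_1),
\]
with $\sigma_\alpha^{\xi_1}$ the slice Clark measures you already computed. Then compute its bi-Poisson extension using the one-variable identity fiberwise to obtain $(1-|\phi(z)|^2)/|\alpha-\phi(z)|^2$, and conclude $\nu_\alpha=\sigma_\alpha$ by uniqueness of the positive representing measure of a pluriharmonic function. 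This avoids disintegrating $\sigma_\alpha$ a priori. Your treatment of the finite exceptional set of $\xi_1\in\T$ and the use of the analytic parameterization of $\mathcal{C}_\alpha$ are both correct.
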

Here, and throughout, $dm$ denotes normalized Lebesgue measure on $\mathbb{T}$.
Note the similarity with \eqref{eq:clarkmeasblaschke}: we sum along level sets, with weights given by derivatives of the underlying rational inner functions.  It is worth pointing out that, by results in \cite{BergqvistI}, Clark measures in two variables satisfy $\dim (\mathrm{supp} \sigma_{\alpha})\geq 1$ so that the smooth curves appearing in our RIF Clark measures are in a certain sense minimal, just like the point masses associated with finite Blaschke products in the unit disk. In contrast to the one-variable situation, however, $\frac{\partial \phi}{\partial z_2}$ need not be well-behaved on $\T^2$, but the weights appearing on the right-hand side in Theorem \ref{theorem:RIFclarkmeas} are nevertheless integrable.

For each $\alpha \in \mathbb{T}$, let  the \emph{Clark embedding} $J_\alpha: K_{\phi} \rightarrow L^2(\sigma_\alpha)$ be initially defined on the reproducing kernels $k^{\phi}_w$ by 
\begin{equation} \label{eqn:Jalpha} (J_\alpha k^{\phi}_w)(z)
= \frac{1-\alpha \overline{\phi(w)}}{(1-z_1 \bar{w}_1)  (1-z_2 \bar{w}_2)} = (1-\alpha \overline{\phi(w)}) k_w(z),  \end{equation}
for all $w \in \mathbb{D}^2$. Then the work of Doubtsov in \cite[Theorem 3.1]{Doubt2020} shows that $J_\alpha$ extends to a linear isometry on $K_{\phi}$. In two or more variables, this isometry need not be surjective (viz. \cite{Doubt2020} and \cite{BickelSolaI}), but fortunately, we have the following.

\begin{thm}[\cite{BickelSolaII}, Theorem 4.2]\label{theorem:poltoratski}
Let $\phi$ be a bidegree $(n_1,n_2)$ RIF  with $n_1,n_2\geq 1$ and let $\alpha \in \T$ be a generic value for $\phi$. Then the Clark embedding $J_{\alpha} \colon K_{\phi}\to L^2(\sigma_{\alpha})$ is unitary.
\end{thm}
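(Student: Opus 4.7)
By Doubtsov's theorem cited above, $J_\alpha\colon K_\phi \to L^2(\sigma_\alpha)$ is an isometric embedding, so its range is automatically a closed subspace of $L^2(\sigma_\alpha)$. The theorem thus reduces to showing the range is \emph{dense}. My plan is to combine the support formula from Theorem~\ref{theorem:RIFclarkmeas} with a Vandermonde-type interpolation argument.

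By Theorem~\ref{theorem:RIFclarkmeas}, one has the identification
\[
L^2(\sigma_\alpha) \;\cong\; \bigoplus_{j=1}^{n_2} L^2(\T, w_j\, dm),\qquad w_j(\xi)=\frac{1}{|\partial_{z_2}\phi(\xi,g_j^\alpha(\xi))|},
\]
via evaluation on the $n_2$ analytic branches $\xi \mapsto (\xi, g_j^\alpha(\xi))$ parameterizing $\mathcal{C}_\alpha$. Since each $w_j$ is positive and integrable on $\T$, trigonometric polynomials in $\xi$ are dense in every summand, so it suffices to show that each tuple $(f_1,\ldots,f_{n_2})$ of trigonometric polynomials is attained as $J_\alpha F$ for some $F\in K_\phi$. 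I would attempt this with the ansatz $F(z_1,z_2)=\sum_{l=0}^{n_2-1}a_l(z_1)\,z_2^l$, which on the $j$-th branch reduces the interpolation to the Vandermonde system $\sum_l a_l(\xi)\,g_j^\alpha(\xi)^l=f_j(\xi)$. Since $\alpha$ is generic, the branches $g_1^\alpha,\ldots,g_{n_2}^\alpha$ are pairwise distinct almost everywhere on $\T$, the Vandermonde matrix $[g_j^\alpha(\xi)^l]_{j,l}$ is a.e.\ invertible, and the $a_l$ are uniquely determined as measurable functions on $\T$.

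The last, and most delicate, step is to promote this candidate $F$ (which a priori lives in $H^2(\D^2)$ once the $a_l$ are approximated by polynomials in $z_1$) to an honest element of $K_\phi$ without losing the prescribed boundary trace. Decomposing $F=P_{K_\phi}F+\phi h$ and restricting to $\mathcal{C}_\alpha$, where $\phi\equiv\alpha$, yields $F|_{\mathcal{C}_\alpha}=(P_{K_\phi}F)|_{\mathcal{C}_\alpha}+\alpha\,h|_{\mathcal{C}_\alpha}$; the error $\alpha\,h|_{\mathcal{C}_\alpha}$ is itself a boundary trace of another $H^2$-function, to which the same interpolation scheme applies. Iterating produces a telescoping series $\sum_{k\geq 0}\alpha^k P_{K_\phi}f_k\in K_\phi$, with $f_{k+1}=\phi^{-1}P_{\phi H^2}f_k$, whose $J_\alpha$-image converges to the desired trace $(f_1,\ldots,f_{n_2})$. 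I expect the principal obstacle to be proving convergence of this iteration, which should rest on a strict-contractivity estimate for the map $f\mapsto\phi^{-1}P_{\phi H^2}f$ on the relevant polynomial subspaces together with the genericity hypothesis on $\alpha$, which is precisely what guarantees the global well-behavedness of the branches $g_j^\alpha$ and the non-degeneracy of the Vandermonde step.
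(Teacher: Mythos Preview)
This theorem is not proved in the paper at all; it is quoted from \cite[Theorem 4.2]{BickelSolaII} as a preliminary result, so there is no in-paper argument to compare your proposal against.

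That said, your proposal has a genuine gap in the final step. The Vandermonde interpolation is fine and produces a polynomial $f_0=F\in H^2(\D^2)$ with the correct trace on $\mathcal C_\alpha$. Your iteration then amounts to the orthogonal Wold-type expansion $f_0=\sum_{k\ge 0}\phi^k g_k$ with $g_k:=P_{K_\phi}f_k\in K_\phi$, coming from $H^2(\D^2)=\bigoplus_{k\ge 0}\phi^k K_\phi$. Here the summands $\phi^k g_k$ are mutually orthogonal and $\sum_k\|g_k\|^2=\|f_0\|^2<\infty$. But the series you actually need to converge is $\sum_{k\ge 0}\alpha^k g_k$ \emph{inside} $K_\phi$, where $|\alpha|=1$ and the $g_k$ are \emph{not} orthogonal to one another. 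Square-summability of $\|g_k\|$ does not yield convergence of such a series. The ``strict-contractivity estimate'' you hope for is precisely what fails: the map $f\mapsto \phi^{-1}P_{\phi H^2}f$ is an orthogonal projection followed by an isometry, hence only a weak contraction with operator norm exactly $1$, and for a general polynomial $f_0$ there is no reason the $\|g_k\|$ decay geometrically (nor does the iteration terminate, since $f_1=\phi^{-1}P_{\phi H^2}f_0$ is typically not a polynomial). Genericity of $\alpha$ enters only through the branch structure of $\mathcal C_\alpha$ and gives you nothing here.

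A cleaner line of attack, closer in spirit to what one finds in \cite{BickelSolaII}, is to avoid the iteration entirely: either exhibit explicit elements of $K_\phi$ (built from the Agler/model-space structure of the RIF) whose traces already realize the interpolation data, or argue dually by showing that $J_\alpha^*$ is injective, i.e.\ that the weighted Cauchy transform $h\mapsto (1-\bar\alpha\phi)\int_{\T^2}h\,k_\zeta\,d\sigma_\alpha$ has trivial kernel on $L^2(\sigma_\alpha)$, using the concrete curve-plus-weight description of $\sigma_\alpha$ from Theorem~\ref{theorem:RIFclarkmeas}.
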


As a note, even though Theorem 4.2 is proved in \cite{BickelSolaII} for RIFs of the form $\frac{\tilde{p}}{p}$, the proof extends without issue to RIFs with an additional monomial factor. 
Corollary \cite[Corollary 5.7]{BickelSolaII} shows that for all but possibly finitely many $\alpha$, the densities appearing in the right-hand side of Theorem \ref{theorem:RIFclarkmeas} are bounded functions on $\mathbb{T}$ which are non-zero Lebesgue almost everywhere. This implies that $L^2(\sigma_{\alpha})$ is infinite-dimensional. Together with Theorem \ref{theorem:poltoratski}, this implies the following fact, which is in contrast to the case of a one-variable finite Blaschke product.
\begin{cor}\label{cor:infdimK}
Let $\phi=\frac{\tilde{p}}{p}$ be a bidegree $(n_1,n_2)$ RIF with $n_1,n_2\geq 1$. Then $K_{\phi}$ is infinite-dimensional.
\end{cor}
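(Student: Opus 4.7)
The plan is to exploit the unitary equivalence $K_{\phi}\cong L^2(\sigma_{\alpha})$ furnished by Theorem \ref{theorem:poltoratski} and to show that the right-hand space is infinite-dimensional for a suitable choice of $\alpha$. Since the statement in the excerpt already attributes the result to this combination of facts, the work is mainly to package the measure-theoretic details cleanly.

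First, I would invoke \cite[Corollary 5.7]{BickelSolaII} to select $\alpha\in\T$ that is (i) generic for $\phi$ and (ii) such that all of the weights $1/|\partial_{z_2}\phi(\xi,g_j^{\alpha}(\xi))|$ appearing on the right-hand side of Theorem \ref{theorem:RIFclarkmeas} are bounded functions on $\T$ that are non-zero $dm$-a.e. Only finitely many $\alpha$ are excluded, so such a choice exists. Because $n_2\geq 1$, the sum in Theorem \ref{theorem:RIFclarkmeas} is non-empty, so there is at least one analytic parameterization $g_1^{\alpha}\colon \T\to \T$ of $\mathcal{C}_{\alpha}$; the hypothesis $n_1\geq 1$ ensures that $\phi$ genuinely depends on both variables so that $g_1^{\alpha}$ is not a constant arising from an exceptional direction.

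Next, I would exhibit an infinite orthonormal-like family in $L^2(\sigma_{\alpha})$. Pick any countable collection $\{E_n\}_{n\geq 1}$ of pairwise disjoint Borel subsets of $\T$ with $m(E_n)>0$, and set
\[
F_n=\{(\xi,g_1^{\alpha}(\xi))\colon \xi\in E_n\}\subset \T^2.
\]
These sets are pairwise disjoint because their projections to the first coordinate are. By the positivity of each summand in the formula of Theorem \ref{theorem:RIFclarkmeas} and the non-vanishing of the density secured above,
\[
\sigma_{\alpha}(F_n)\;\geq\;\int_{E_n}\frac{dm(\xi)}{\bigl|\partial_{z_2}\phi(\xi,g_1^{\alpha}(\xi))\bigr|}\;>\;0.
\]
Therefore the characteristic functions $\{\chi_{F_n}\}_{n\geq 1}$ form an infinite pairwise orthogonal family of non-zero elements of $L^2(\sigma_{\alpha})$, so $\dim L^2(\sigma_{\alpha})=\infty$. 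Since $J_{\alpha}$ is a unitary by Theorem \ref{theorem:poltoratski}, we conclude $\dim K_{\phi}=\infty$.

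The only substantive issue is step two: guaranteeing that the contribution of one parameterization curve to the Clark measure is not swamped or killed by cancellation from other curves. This is harmless here because $\sigma_{\alpha}$ is a positive measure and the structure formula expresses it as a sum (not a signed combination) of pushforwards of bounded-from-below densities on disjoint parameter copies of $\T$; thus restricting to the image of a single $g_1^{\alpha}$ and to any set of positive $dm$-measure yields a set of strictly positive $\sigma_{\alpha}$-measure. No other subtleties arise.
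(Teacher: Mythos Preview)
Your argument is correct and follows essentially the same route as the paper: choose a generic $\alpha$ for which \cite[Corollary 5.7]{BickelSolaII} guarantees bounded, a.e.\ nonzero densities, deduce that $L^2(\sigma_\alpha)$ is infinite-dimensional, and transfer this to $K_\phi$ via the unitarity of $J_\alpha$ from Theorem~\ref{theorem:poltoratski}. You simply make explicit (via the disjoint sets $F_n$) the step the paper leaves to the reader; one minor remark is that the role of $n_1\geq 1$ is really to ensure generic $\alpha$ exist at all (if $n_1=0$ every level set contains horizontal lines), rather than to control the shape of $g_1^\alpha$.
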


\subsection{Agler decompositions}\label{aglerdec}
The conclusion in Corollary \ref{cor:infdimK} can also be deduced from other known results. For example, the existence of Agler decompositions for Schur functions on the bidisk  (see \cite{Agler90, bsv05})   immediately implies that for any nonconstant inner function $\phi$ on $\mathbb{D}^2$, the associated model space $K_\phi$ is infinite-dimensional. 

  As is explained in \cite{BickelLiaw, BickelGorkin17}, the model space $K_{\phi}$ can be decomposed as
\begin{equation}
K_{\phi}=S_1^{\mathrm{max}}\oplus S^{\mathrm{min}}_2,
\label{maxmindecomp}
\end{equation}
where the subspace $S_1^{\mathrm{max}}$ is the largest subspace of $K_\phi$ invariant with respect to $T_1$. It is also true (but non-trivial to prove) that $S_2^{\mathrm{min}}$ is $T_2$-invariant. Moreover, we have
\[S_1^{\mathrm{max}}=\mathcal{H}\left(\frac{K_1(z,w)}{1-\bar{w}_1z_1}\right) \ \ \text{ and }  \ \ S_2^{\mathrm{min}}=\mathcal{H}\left(\frac{K_2(z,w)}{1-\bar{w}_2z_2}\right),\]
for positive kernels $K_1, K_2: \mathbb{D}^2 \times \mathbb{D}^2 \rightarrow \mathbb{C}$, where
\[ \mathcal{H}(K_1 )=S_1^{\mathrm{max}} \ominus T_1 S_1^{\mathrm{max}} \ \ \text{ and } \ \ \mathcal{H}(K_2)=S_2^{\mathrm{min}} \ominus T_2 S_2^{\mathrm{min}}  \]
and thus, 
\[ S_1^{\mathrm{max}} = \bigoplus_{k=0}^{\infty}  z_1^k \mathcal{H}(K_1 )\ \ \text{ and } \ \ S_2^{\mathrm{min}} = \bigoplus_{l=0}^{\infty}  z_2^l \mathcal{H}(K_2 ).\]

We also recall (see \cite[Theorem 3.2]{BickelLiaw}) that if a RIF $\phi$ satisfies
$\deg_{z_1}\phi=n_1$ and $\deg_{z_2}\phi=n_2$, then 
\begin{equation}
\dim(\mathcal{H}(K_1))=n_2\quad \textrm{and} \quad \dim(\mathcal{H}(K_2))=n_1.
\label{kerneldims}
\end{equation}

\subsection{Taylor Joint Spectrum}\label{subsec:Tspec}
While there are several different notions of spectrum for tuples of commuting bounded operators, the Taylor joint spectrum has proven to be a particularly useful tool in the study of shift operators on Hilbert spaces of analytic functions; for instances of this, see \cite{CT,GRS,Bhatta} and their lists of references. Comprehensive treatments of the Taylor joint spectrum for tuples of operators can be found in \cite{tay,cur} or \cite[Chapter IV]{mul}. These references also include an overview of other spectral notions (the Harte spectrum, etc.) and conditions for when they coincide. Since we deal exclusively with pairs of operators, we only provide the relevant definitions in that setting. 

Let $(T_1, T_2)$ be a pair of commuting bounded operators on a Hilbert space $\mathcal{H}$. The \emph{Taylor joint spectrum} of $(T_1, T_2)$, denoted $\tay(T_1, T_2)$, is defined as follows. For each $(\lambda_1, \lambda_2) \in \mathbb{C}^2$, consider the associated \emph{Koszul complex} $K(T_1-\lambda_1, T_2-\lambda_2)$:
\[ 0 \xrightarrow{\delta_0} \mathcal{H} \xrightarrow{\delta_1} \mathcal{H} \oplus \mathcal{H} \xrightarrow{\delta_2} \mathcal{H} \xrightarrow{\delta_3} 0,\]
where $\delta_0(0) =0$, 
\[ 
\begin{aligned}
\delta_1(h) &= ( (T_1-\lambda_1) h , (T_2-\lambda_2) h), \quad  \text{ for all } h \in \mathcal{H}, \\ 
\delta_2(h_1, h_2)&  = (T_1 -\lambda_1) h_2 - (T_2-\lambda_2) h_1, \text{ for all } h_1, h_2 \in \mathcal{H}
\end{aligned}
\]
and $\delta_3(h) = 0$ for all $h \in \mathcal{H}$. By construction, the $\delta_n$ are bounded linear maps and $\text{Ran}( \delta_{n-1} )\subseteq \text{Ker}( \delta_n)$ for each $n=1,2,3$ so that $K(T_1-\lambda_1, T_2-\lambda_2)$ is indeed a chain complex.

If we also have $ \text{Ker}( \delta_n)  \subseteq \text{Ran} (\delta_{n-1})$ for all $n$, then we say $K(T_1-\lambda_1, T_2-\lambda_2)$ is \emph{nonsingular} and $(\lambda_1, \lambda_2)  \not \in \tay(T_1, T_2)$.  Meanwhile, if $ \text{Ker}( \delta_n)  \not \subseteq \text{Ran} (\delta_{n-1})$ for any $n$, then we say $K(T_1-\lambda_1, T_2-\lambda_2)$ is \emph{singular} and $(\lambda_1, \lambda_2) \in \tay(T_1, T_2)$. Note that $\textrm{Ker}(\delta_1)\neq\{0\}=\mathrm{Ran}(\delta_0)$ amounts to the existence of a joint eigenvector with associated joint eigenvalue $(\lambda_1,\lambda_2) \in \C^2$, while $\mathrm{Ran}(\delta_2)\neq \mathrm{Ker}(\delta_3)$ means that the joint range of the pair $(T_1-\lambda_1, T_2-\lambda_2)$ fails to equal $\mathcal{H}$.

Let us list some basic properties that are particularly relevant for our work; other distinguishing features of the Taylor joint spectrum, such as the existence of a holomorphic functional calculus, are discussed in \cite{tay,cur,mul}. 
\begin{prop}\label{prop:taybasics}
Let $(T_1,T_2)$ be a pair of commuting bounded operators on a Hilbert space $\mathcal{H}$. Then
\begin{enumerate}
\item The set $\tay(T_1,T_2)$ is a compact and non-empty subset of $\mathbb{C}^2$. 
\item We have $\tay(T_1, T_2)\subset \sigma(T_1) \times \sigma(T_2)$, where $\sigma(T_j)$ denotes the spectrum of the single operator $T_j$.
\item If the pair $(T_1,T_2)$ is jointly similar to the pair $(T'_1,T'_2)$ then \[\tay(T_1,T_2)=\tay(T'_1,T'_2).\]
\end{enumerate}
\end{prop}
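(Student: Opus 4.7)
The plan is to handle the three parts separately, leveraging standard homological arguments on the Koszul complex. For the compactness part of (1), I would show that the set of $(\lambda_1,\lambda_2)$ at which the Koszul complex $K(T_1-\lambda_1,T_2-\lambda_2)$ is nonsingular is open in $\mathbb{C}^2$. The key observation is that each boundary map $\delta_n$ depends continuously, in operator norm, on $(\lambda_1,\lambda_2)$, and exactness at each node of a chain complex of Hilbert spaces is equivalent to invertibility of the associated ``Laplacian'' $\delta_n^\ast \delta_n + \delta_{n+1}\delta_{n+1}^\ast$ (via the Hodge decomposition for bounded complexes). Since the invertible operators form an open set and the Laplacians depend continuously on $\lambda$, the nonsingular locus is open, so $\tay(T_1,T_2)$ is closed. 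Boundedness will follow from part (2), which forces $\tay(T_1,T_2)\subset\sigma(T_1)\times\sigma(T_2)$, a compact set. Non-emptiness is the part I expect to be the main obstacle: the classical argument is not elementary and proceeds either through Taylor's analytic functional calculus (an empty Taylor spectrum would allow one to separate $0$ and $1$ via the calculus, leading to a contradiction) or via a projective spectrum / Koszul resolution argument; I would cite the treatment in \cite{tay} or \cite{mul}.

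For part (2), suppose $\lambda_1\notin\sigma(T_1)$ and set $R=(T_1-\lambda_1)^{-1}$. The plan is to write down explicit contracting homotopies witnessing exactness of the Koszul complex at every node. Define $s_2(h_1,h_2)=Rh_1$ and $s_3(h)=(0,Rh)$, and use that $T_1$ and $T_2$ commute (hence so do $R$ and $T_2-\lambda_2$) to verify $\delta_{n-1}s_n+s_{n+1}\delta_n=\mathrm{id}$ on each $\mathcal{H}_n$. This makes the complex chain-homotopic to zero and hence exact, so $(\lambda_1,\lambda_2)\notin\tay(T_1,T_2)$. The case $\lambda_2\notin\sigma(T_2)$ is symmetric.

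For part (3), suppose $S\colon\mathcal{H}\to\mathcal{H}'$ is invertible with $T'_j=ST_jS^{-1}$. Define block-diagonal invertible intertwining maps $\Phi_0=S$, $\Phi_1=S$, $\Phi_2=S\oplus S$, $\Phi_3=S$ between the Koszul complexes of $(T_1-\lambda_1,T_2-\lambda_2)$ and $(T'_1-\lambda_1,T'_2-\lambda_2)$. A short check shows $\Phi_n\delta_n=\delta'_n\Phi_{n-1}$ at each node, so the two complexes are isomorphic as chain complexes. Exactness of one is therefore equivalent to exactness of the other, and the Taylor spectra coincide. Thus, aside from non-emptiness in (1), all steps are essentially formal consequences of the definitions; the substantive input concentrated in a single, classical statement we would cite rather than reprove.
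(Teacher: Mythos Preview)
Your proposal is correct, but there is nothing to compare it to: the paper does not prove Proposition~\ref{prop:taybasics}. It lists these facts as ``basic properties'' of the Taylor joint spectrum and simply defers to the references \cite{tay,cur,mul}. What you have written is therefore strictly more than the paper itself supplies. Your Hodge/Laplacian argument for openness of the nonsingular locus, the explicit contracting homotopy for the projection property in (ii), and the chain-map isomorphism for (iii) are all standard and correctly executed (your $s_2,s_3$ really do give $\delta_1 s_2+s_3\delta_2=\mathrm{id}$ once one uses that $R$ and $T_2-\lambda_2$ commute), and your honest flagging of non-emptiness as the only non-elementary ingredient is exactly right. In short: the paper cites; you sketch the actual proofs.
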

In particular, if $(T_1,T_2)$ is a pair of commuting unitaries, as will be the case in this paper, then (ii) implies that $\tay(T_1,T_2)\subset \T^2$. 

\begin{example}[\cite{CT}, Theorem 5]
Let $(T_1,T_2)$ be the shift operators acting on $H^2(\D^2)$. Then $\tay(T_1,T_2)=\overline{\D^2}$.
\end{example} 
Further examples of specific sets arising as Taylor joint spectra can be found, for instance, in \cite{BuPa}.

\section{Clark Unitary Operators $(U_\alpha^1, U_\alpha^2)$}\label{sec:unitaries}
Let $\phi$ be a nonconstant inner function in $\D^2$. We begin by computing the adjoint of the associated Clark embedding $J_{\alpha}$ which was defined in Subsection \ref{subsec:RIFClark}. The following result is a direct analog of Clark's formula for the one-variable adjoint, see e.g., \cite[Theorem 11.6]{GMR}.

\begin{lm} The adjoint $J_\alpha^*:  L^2(\sigma_\alpha) \rightarrow  K_\phi$ of $J_\alpha$ from \eqref{eqn:Jalpha} is given by 
\[ (J_\alpha^*h)(z) = (1-\bar{\alpha} \phi(z)) \int_{\mathbb{T}^2} \ h(\zeta) \ k_\zeta(z) \ d\sigma_\alpha(\zeta),\]
for  $h \in L^2(\sigma_\alpha)$ and $z \in \mathbb{D}^2$. 
\end{lm}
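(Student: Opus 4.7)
I would verify the formula by testing against reproducing kernels, which span a dense subspace of $K_\phi$. Let
\[ (Ah)(z) := (1 - \bar\alpha \phi(z)) \int_{\mathbb{T}^2} h(\zeta)\, k_\zeta(z)\, d\sigma_\alpha(\zeta), \qquad z \in \mathbb{D}^2, \]
for $h \in L^2(\sigma_\alpha)$. First I would check that $Ah$ is well-defined and holomorphic on $\mathbb{D}^2$: for fixed $z \in \mathbb{D}^2$ the map $\zeta \mapsto k_\zeta(z)$ is bounded and continuous on $\mathbb{T}^2$, and since $\sigma_\alpha$ is a finite positive Borel measure, Cauchy--Schwarz gives absolute convergence, while holomorphic dependence on $z$ follows from the convergent series expansion of $k_\zeta(z)$.

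The key computation is, for each fixed $w \in \mathbb{D}^2$, to evaluate the $L^2(\sigma_\alpha)$ pairing of $h$ with $J_\alpha k^{\phi}_w$ using the formula \eqref{eqn:Jalpha}:
\[ \langle h, J_\alpha k^{\phi}_w \rangle_{L^2(\sigma_\alpha)} = \overline{(1-\alpha \overline{\phi(w)})} \int_{\mathbb{T}^2} h(\zeta)\, \overline{k_w(\zeta)}\, d\sigma_\alpha(\zeta) = (1 - \bar\alpha \phi(w)) \int_{\mathbb{T}^2} h(\zeta)\, k_\zeta(w)\, d\sigma_\alpha(\zeta), \]
where the second equality exploits the conjugate symmetry $\overline{k_w(\zeta)} = k_\zeta(w)$ of the Szeg\H{o} kernel, valid here because $\bar{\zeta}_j = 1/\zeta_j$ for $\zeta \in \mathbb{T}^2$. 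The rightmost expression is precisely $(Ah)(w)$.

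To conclude, I would invoke the existence of $J_\alpha^*: L^2(\sigma_\alpha) \to K_\phi$ as the adjoint of the bounded operator $J_\alpha$, together with the reproducing property of $k_w^\phi$ in $K_\phi$:
\[ (J_\alpha^* h)(w) = \langle J_\alpha^* h, k^{\phi}_w \rangle_{K_\phi} = \langle h, J_\alpha k^{\phi}_w \rangle_{L^2(\sigma_\alpha)} = (Ah)(w). \]
Since $w \in \mathbb{D}^2$ is arbitrary, $J_\alpha^* h = Ah$ as holomorphic functions on the bidisk. The main subtlety — and the one step that is handled implicitly rather than by a direct calculation — is that the integral formula for $Ah$ does not obviously produce an element of $K_\phi$; the prefactor $(1-\bar\alpha \phi(z))$ is what enforces orthogonality to $\phi H^2(\mathbb{D}^2)$, and this membership is obtained for free by identifying $Ah$ with $J_\alpha^* h$, which lands in $K_\phi$ by construction.
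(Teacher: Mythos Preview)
Your argument is correct and follows essentially the same route as the paper: evaluate $(J_\alpha^* h)(z)$ via the reproducing property of $k^\phi_z$, move the adjoint across, substitute $J_\alpha k^\phi_z = (1-\alpha\overline{\phi(z)})k_z$, and unwind the inner product using $\overline{k_z(\zeta)}=k_\zeta(z)$. Your extra remarks on well-definedness, holomorphy, and the implicit membership of $Ah$ in $K_\phi$ are sound and simply make explicit what the paper leaves to the reader.
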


\begin{proof} Fix $h \in L^2(\sigma_\alpha)$ and $z \in \mathbb{D}^2$.  Observe that
\[
\begin{aligned}
(J_\alpha^*h)(z) & = \left \langle J_\alpha^*h, k^\phi_z \right \rangle_{K_\phi} \\
& =  \left \langle h, J_\alpha k^\phi_z \right \rangle_{L^2(\sigma_\alpha)} \\
& =  \left \langle h,  (1-\alpha \overline{\phi(z)}) k_z \right \rangle_{L^2(\sigma_\alpha)} \\
& = (1-\bar{\alpha}\phi(z)) \int_{\mathbb{T}^2} h(\zeta)\ k_\zeta(z) \ d \sigma_\alpha(\zeta),
\end{aligned}
\] 
as needed. 
\end{proof}

Our next goal is to use the operators $J_\alpha$ and $J_\alpha^*$ to connect the multiplication operators $M_{z_1}$ and $ M_{z_2}$ on $L^2(\sigma_\alpha)$ with the compressed shifts $S_\phi^1$ and $S_\phi^2$ on $K_\phi$.
To do that, we need to recall the backward shift $B_1$ in $z_1$, which can be applied to any  function $f$ holomorphic on $\mathbb{D}^2$ as follows:
\[ (B_1 f)(z) = \frac{ f(z) - f(0,z_2)}{z_1}, \quad z=(z_1,z_2)\in \D^2.\]
The backward shift satisfies the following product formula 
\begin{equation} \label{eqn:BWSproduct} \big(B_1 (fg) \big)(z) = f(z) (B_1 g)(z) + g(0, z_2) (B_1 f )(z),\end{equation}
for all $f,g$ holomorphic on $\mathbb{D}^2$. Also for $\alpha \in \mathbb{T}$, define the function $\abf{1}_\alpha$, holomorphic on $\mathbb{D}^2$, by
\begin{equation} \label{eqn:psi_alpha}
    \abf{1}_\alpha(z):= \bar{\alpha} \frac{(B_1\phi)(z)}{1- \bar{\alpha}\phi(0, z_2)}.
\end{equation} 
Then we have the following intertwining result:

\begin{prop} \label{prop:V1alpha} On $L^2(\sigma_\alpha)$, we have $J_\alpha^* M_{\bar{z}_1} = V^1_\alpha J_\alpha^*$, where $V^1_\alpha$ is defined on the range of $J_\alpha^*$ in $K_\phi$ by 
\[ (V_\alpha^1 f)(z) = (B_1 f)(z) + \abf{1}_\alpha(z) f(0, z_2), \text{ for all } z\in \mathbb{D}^2.\]

\end{prop}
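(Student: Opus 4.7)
The plan is a direct pointwise computation on $\D^2$ comparing the two sides of the asserted identity when applied to an arbitrary $h \in L^2(\sigma_\alpha)$. The engine driving the proof is a single kernel identity: for the Szeg\H{o} kernel $k_\zeta(z) = 1/[(1-z_1\bar{\zeta}_1)(1-z_2\bar{\zeta}_2)]$, a one-line calculation from the definition of $B_1$ yields
\[
(B_1 k_\zeta)(z) = \bar{\zeta}_1\, k_\zeta(z).
\]
This lets me rewrite
\[
(J_\alpha^* M_{\bar{z}_1}h)(z) = (1-\bar{\alpha}\phi(z))\int_{\T^2} h(\zeta)\, \bar{\zeta}_1 k_\zeta(z)\,d\sigma_\alpha(\zeta)
\]
as $(1-\bar{\alpha}\phi(z))\int_{\T^2} h(\zeta) (B_1 k_\zeta)(z)\,d\sigma_\alpha(\zeta)$. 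Pulling $B_1$ outside the integral needs no Fubini argument, as it is a difference quotient in $z_1$ with $z_2$ fixed, so it commutes with a bounded linear functional in $h$ by the linearity of point evaluation.

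Next I would set $G(z) := \int_{\T^2} h(\zeta) k_\zeta(z)\,d\sigma_\alpha(\zeta)$ so that, on $\D^2$, $J_\alpha^* h = (1-\bar{\alpha}\phi)\cdot G$. Applying the product rule \eqref{eqn:BWSproduct} to this factorization, together with $B_1(1-\bar{\alpha}\phi) = -\bar{\alpha} B_1\phi$, gives
\[
(B_1 J_\alpha^* h)(z) = (1-\bar{\alpha}\phi(z)) (B_1 G)(z) - \bar{\alpha} (B_1\phi)(z)\, G(0, z_2).
\]
Since $(B_1 G)(z) = \int_{\T^2} h(\zeta) (B_1 k_\zeta)(z)\,d\sigma_\alpha(\zeta)$, rearranging and invoking the previous step produces the key intermediate identity
\[
(J_\alpha^* M_{\bar{z}_1} h)(z) = (B_1 J_\alpha^* h)(z) + \bar{\alpha} (B_1\phi)(z)\, G(0, z_2).
\]

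Finally, I would eliminate the auxiliary function $G$ in favor of $J_\alpha^* h$ by observing that $(J_\alpha^* h)(0, z_2) = (1-\bar{\alpha}\phi(0, z_2))\, G(0, z_2)$. Since $\phi$ is inner, $|\phi(0, z_2)| < 1$ for $z_2 \in \D$ (excluding the trivial case of $\phi$ a unimodular constant), so $1-\bar{\alpha}\phi(0, z_2)$ is nonvanishing on $\D$ and the division is legitimate. Substituting $G(0, z_2) = (J_\alpha^* h)(0, z_2)/(1-\bar{\alpha}\phi(0, z_2))$ recovers exactly the coefficient $\abf{1}_\alpha(z) = \bar{\alpha}(B_1\phi)(z)/(1-\bar{\alpha}\phi(0, z_2))$ from \eqref{eqn:psi_alpha}, yielding $(J_\alpha^* M_{\bar{z}_1} h)(z) = (V_\alpha^1 J_\alpha^* h)(z)$. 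No real obstacle stands in the way: this is largely bookkeeping once the kernel identity $\bar{\zeta}_1 k_\zeta = B_1 k_\zeta$ is noticed. The only subtle point is to confirm the non-vanishing of $1-\bar{\alpha}\phi(0, z_2)$ so that $\abf{1}_\alpha$, and hence $V_\alpha^1$ on the range of $J_\alpha^*$, is well-defined as a holomorphic object on $\D^2$.
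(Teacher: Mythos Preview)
Your proposal is correct and follows essentially the same route as the paper: both compute $(J_\alpha^* M_{\bar z_1}h)(z)$ via the kernel identity $\bar\zeta_1 k_\zeta = B_1 k_\zeta$, apply the product rule \eqref{eqn:BWSproduct} to $(1-\bar\alpha\phi)G$, and then express $G(0,z_2)$ in terms of $(J_\alpha^* h)(0,z_2)$. The only cosmetic difference is in this last step: you evaluate the pointwise factorization $J_\alpha^* h = (1-\bar\alpha\phi)G$ directly at $(0,z_2)$, whereas the paper rewrites $G(0,z_2)=\langle h,k_{(0,z_2)}\rangle_{L^2(\sigma_\alpha)}$ and passes through $J_\alpha k^\phi_{(0,z_2)}=(1-\alpha\overline{\phi(0,z_2)})k_{(0,z_2)}$ and the reproducing property in $K_\phi$---an equivalent computation.
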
 

\begin{proof}  Fix $g\in L^2(\sigma_\alpha)$ and define $G(z) = \langle g, k_z \rangle_{L^2(\sigma_\alpha)}$ for $z \in \mathbb{D}^2$. We have 
\[ 
\begin{aligned}
(J_\alpha^* M_{\bar{z}_1} g )(z) &= (1-\bar{\alpha}\phi(z)) \int_{\mathbb{T}^2} \bar\zeta_1 g(\zeta) \ k_\zeta(z) \ d \sigma_\alpha(\zeta) \\
& = (1-\bar{\alpha}\phi(z)) (B_1 G) (z) \\
& = B_1(  (1-\bar{\alpha}\phi) G)(z) -( B_1 (1-\bar{\alpha}\phi))(z) G(0, z_2), 
\end{aligned}
\]
by the identity $\bar{\zeta}_1k_{\zeta}=\frac{1}{z_1}(k_{\zeta}-k_{(0,\zeta_2)})$ and the product formula \eqref{eqn:BWSproduct}. Then, by the definition of $J_\alpha^*$ and $G$, we have 
\[ 
\begin{aligned}
(J_\alpha^* M_{\bar{z}_1} g )(z) & = (B_1J^*_\alpha g)(z) + \bar{\alpha} (B_1 \phi)(z) \left \langle g, k_{(0, z_2)} \right \rangle_{L^2(\sigma_\alpha)} \\
&= (B_1J^*_\alpha g)(z) + \bar{\alpha} (B_1 \phi)(z) \left \langle g, \frac{1}{1-\alpha \overline{\phi(0, z_2)}} J_\alpha k^\phi_{(0, z_2)} \right \rangle_{L^2(\sigma_\alpha)} \\
& = (B_1J^*_\alpha g)(z) + \abf{1}_\alpha(z)  \left \langle J_\alpha^*g, k^\phi_{(0, z_2)} \right \rangle_{K_\phi} \\
& = (B_1J^*_\alpha g)(z) + \abf{1}_\alpha(z) (J_\alpha^*g)(0, z_2), \end{aligned}
\] 
where we used the fact that $J_\alpha k^\phi_{(0, z_2)} = (1- \alpha \overline{\phi(0, z_2)}) k_{(0, z_2)}$ and the reproducing property of $k^\phi_{(0, z_2)}$. 
\end{proof}

Now we will use Proposition \ref{prop:V1alpha} to connect $M_{z_1}$ on $L^2(\sigma_\alpha)$ with the compressed shift $S_\phi^1$ on $K_\phi$ for certain $\phi$. To do that, recall that $H^2_2$ is the Hardy space on the disk with independent variable $z_2$. If $f \in H^2(\mathbb{D}^2)$, then
\[ (P_{H^2_2} f)(z) = f(0, z_2).\]

Then we have the following theorem:

\begin{thm} \label{thm:U1alpha} Let $J_\alpha: K_\phi \rightarrow L^2(\sigma_\alpha)$ be surjective and let $\abf{1}_\alpha$ be bounded on $\mathbb{D}^2$. Then 
$M_{z_1}$ on $L^2(\sigma_\alpha)$ satisfies $M_{z_1}  = J_\alpha U^1_\alpha J^*_\alpha$, where  $U^1_\alpha$ is a unitary operator on $K_\phi$ defined by
\begin{equation} \label{eqn:Ualpha1} U^1_\alpha  = S_\phi^1  + P_\phi P_{H^2_2} M_{\overline{\abf{1}_\alpha}}.\end{equation}
\end{thm}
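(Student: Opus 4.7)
The strategy is to observe that, under the surjectivity hypothesis, the theorem reduces to computing the Hilbert-space adjoint of $V^1_\alpha$ acting on $K_\phi$, and verifying that this adjoint coincides with the operator $S^1_\phi + P_\phi P_{H^2_2}M_{\overline{\abf{1}_\alpha}}$. Unitarity will then come for free from the unitary equivalence with the unitary operator $M_{z_1}$ on $L^2(\sigma_\alpha)$.

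First, since $J_\alpha$ is an isometry (by Doubtsov's result) and is surjective by hypothesis, $J_\alpha$ is unitary, so $J_\alpha J_\alpha^* = I_{L^2(\sigma_\alpha)}$ and $J_\alpha^* J_\alpha = I_{K_\phi}$. Taking Hilbert-space adjoints of the identity $J_\alpha^* M_{\bar{z}_1} = V^1_\alpha J_\alpha^*$ from Proposition \ref{prop:V1alpha} gives $M_{z_1} J_\alpha = J_\alpha (V^1_\alpha)^*$, and multiplying on the right by $J_\alpha^*$ yields $M_{z_1} = J_\alpha (V^1_\alpha)^* J_\alpha^*$. Hence it suffices to show that the formula \eqref{eqn:Ualpha1} defines an operator equal to $(V^1_\alpha)^*$ on $K_\phi$.

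Second, compute $(V^1_\alpha)^*$ directly, decomposing $V^1_\alpha = B_1 + M_{\abf{1}_\alpha} P_{H^2_2}$. For any $f, g \in K_\phi$, the $K_\phi$-inner product equals the $H^2(\mathbb{D}^2)$-inner product. For the $B_1$ summand, use the classical identity $B_1^* = T_1$ on $H^2(\mathbb{D}^2)$ together with $\langle f, T_1 g\rangle_{H^2} = \langle f, P_{K_\phi}T_1 g\rangle_{K_\phi} = \langle f, S^1_\phi g\rangle_{K_\phi}$, which is valid because $f\in K_\phi$ allows insertion of $P_\phi = P_{K_\phi}$ on the right-hand slot. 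For the second summand, write the inner product on the boundary $\mathbb{T}^2$, using that $\abf{1}_\alpha$ is bounded on $\mathbb{D}^2$ (and hence has $L^\infty$ boundary values) to move $\overline{\abf{1}_\alpha}$ to the right slot: $\langle M_{\abf{1}_\alpha} P_{H^2_2} f, g\rangle_{L^2(\mathbb{T}^2)} = \langle P_{H^2_2} f, \overline{\abf{1}_\alpha} g\rangle_{L^2(\mathbb{T}^2)}$. Then apply self-adjointness of $P_{H^2_2}$ (on $L^2(\mathbb{T}^2)$) to obtain $\langle f, P_{H^2_2} M_{\overline{\abf{1}_\alpha}} g\rangle_{L^2(\mathbb{T}^2)}$, and finally insert $P_\phi$ on the right since $f \in K_\phi$. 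Summing both contributions gives $(V^1_\alpha)^* = S^1_\phi + P_\phi P_{H^2_2} M_{\overline{\abf{1}_\alpha}} = U^1_\alpha$, as required. Unitarity of $U^1_\alpha$ is then immediate: it is conjugate via the unitary $J_\alpha$ to the unitary $M_{z_1}$ on $L^2(\sigma_\alpha)$.

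The main subtlety lies in the bookkeeping of the second step: although $V^1_\alpha$ maps $K_\phi$ into $K_\phi$ (as a consequence of the unitary equivalence), the individual summands $B_1 f$ and $\abf{1}_\alpha \cdot f(0, z_2)$ generally do not lie in $K_\phi$, so adjoints must be computed in the ambient $H^2(\mathbb{D}^2)$ or $L^2(\mathbb{T}^2)$ and only compressed to $K_\phi$ at the very end via the projection $P_\phi$. The boundedness hypothesis on $\abf{1}_\alpha$ is precisely what ensures $M_{\overline{\abf{1}_\alpha}}$ is a bounded operator on $L^2(\mathbb{T}^2)$ and thus legitimizes these manipulations.
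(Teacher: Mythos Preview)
Your proposal is correct and follows essentially the same approach as the paper: both arguments use the unitarity of $J_\alpha$ to reduce the problem to identifying $(V^1_\alpha)^*$, then compute this adjoint via inner products using $B_1^* = T_1$, the self-adjointness of $P_{H^2_2}$, the boundedness of $\abf{1}_\alpha$ to move the multiplication across, and the insertion of $P_\phi$ at the end using $f\in K_\phi$. The paper simply writes the pairing as $\langle U^1_\alpha f, g\rangle = \langle f, V^1_\alpha g\rangle$ rather than $\langle V^1_\alpha f, g\rangle = \langle f, (V^1_\alpha)^* g\rangle$, but this is the same computation with the roles of $f$ and $g$ swapped.
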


\begin{rem*}
    Throughout, the results for $U^2_\alpha$ are simply the symmetric analogs (exchanging indices 1 and 2) of those for $U_\alpha^1$.
\end{rem*}

\begin{proof} Let $U_\alpha^1 = (V_\alpha^1)^*$,  where $V_\alpha^1$ was defined in Proposition \ref{prop:V1alpha}.  By Proposition \ref{prop:V1alpha} and the assumption that $J_\alpha$ is a surjective isometry (and hence unitary), we have $M_{z_1}  = J_\alpha U^1_\alpha J^*_\alpha$ or equivalently, $U_\alpha^1 = J_\alpha^* M_{z_1} J_\alpha$. Since $M_{z_1}$ is unitary on $L^2(\sigma_\alpha)$, the operator $U_\alpha^1$ is unitary on $K_\phi$. Furthermore, since $\abf{1}_\alpha$ is bounded, multiplication by $\abf{1}_\alpha$ and by $\overline{\abf{1}_\alpha}$  (denoted $M_{\abf{1}_\alpha}$ and $M_{\overline{\abf{1}_\alpha}}$) are both bounded operators on $L^2(\mathbb{T}^2)$. 

 To find the formula for $U_\alpha^1$, fix $f, g\in K_\phi$. Then
\[ 
\begin{aligned}
\langle U_\alpha^1 f, g \rangle_{K_\phi} &=  \langle  f, V_\alpha^1g \rangle_{K_\phi} \\
&=  \langle  f, B_1 g +\abf{1}_\alpha P_{H^2_2} g \rangle_{K_\phi} \\
& = \langle  S_\phi^1 f,  g \rangle_{K_\phi} +  \langle  M_{\overline{\abf{1}_\alpha}} f,  P_{H^2_2} g \rangle_{L^2} \\
& =  \langle  S_\phi^1 f,  g \rangle_{K_\phi} +  \langle   P_{H^2_2} M_{\overline{\abf{1}_\alpha}} f,  g \rangle_{H^2} \\
& =  \langle ( S_\phi^1 + P_{\phi} P_{H^2_2} M_{\overline{\abf{1}_\alpha}} ) f,  g \rangle_{K_\phi}, 
\end{aligned}
\]
which gives the desired formula for $U_\alpha^1.$
\end{proof}

We have the following two corollaries.

\begin{cor} \label{cor:cross1} Let $J_\alpha: K_\phi \rightarrow L^2(\sigma_\alpha)$ be surjective and $\abf{1}_\alpha$ bounded on $\mathbb{D}^2$. If $\phi(0,z_2) \equiv 0$, then $U^1_\alpha = J_\alpha^* M_{z_1} J_\alpha$, where 
\[ (U^1_\alpha f)(z)  = (S_\phi^1 f)(z) + \left \langle f, \abf{1}_\alpha k_{(z_2,0)} \right \rangle_{H^2}, \text{ for all } f \in K_\phi, \  z \in \mathbb{D}^2.\]
\end{cor}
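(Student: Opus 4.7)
The plan is to specialize Theorem \ref{thm:U1alpha} to the hypothesis $\phi(0,z_2)\equiv 0$, simplify the operator $P_\phi P_{H^2_2}M_{\overline{\abf{1}_\alpha}}$, and then pass to a pointwise formula via reproducing kernels.

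First I would observe that $\phi(0,z_2)\equiv 0$ forces a factorization $\phi(z)=z_1\widetilde\phi(z)$ with $\widetilde\phi\in H^\infty(\mathbb{D}^2)$, so $\phi H^2(\mathbb{D}^2)\subseteq z_1 H^2(\mathbb{D}^2)$. A direct Fourier-coefficient check on $\mathbb{T}^2$ (functions in $H^2_2$ have nonzero coefficients only on $\{(0,l):l\ge 0\}$, whereas $z_1H^2(\mathbb{D}^2)$ is supported on $\{(k,l):k\ge 1,\ l\ge 0\}$) then yields $H^2_2\perp z_1H^2(\mathbb{D}^2)\supseteq \phi H^2(\mathbb{D}^2)$, so $H^2_2\subseteq K_\phi$. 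In particular $P_\phi$ is the identity on $\mathrm{Ran}(P_{H^2_2})\subseteq H^2_2$, and \eqref{eqn:Ualpha1} collapses to
\[ U^1_\alpha = S^1_\phi + P_{H^2_2} M_{\overline{\abf{1}_\alpha}}. \]

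To obtain the pointwise formula I would evaluate at $z=(z_1,z_2)\in\mathbb{D}^2$. The subspace $H^2_2$ has reproducing kernel at $z_2$ equal to $\tfrac{1}{1-w_2\bar z_2}$, which, embedded in $H^2(\mathbb{D}^2)$, is $k_{(0,z_2)}(w)$. Since $\abf{1}_\alpha\in H^\infty(\mathbb{D}^2)$ ensures $M_{\overline{\abf{1}_\alpha}}f\in L^2(\mathbb{T}^2)$, and $P_{H^2_2}$ is self-adjoint on $L^2(\mathbb{T}^2)$,
\[
(P_{H^2_2} M_{\overline{\abf{1}_\alpha}} f)(z)=\langle M_{\overline{\abf{1}_\alpha}} f,\, k_{(0,z_2)}\rangle_{L^2(\mathbb{T}^2)}=\langle f,\, \abf{1}_\alpha k_{(0,z_2)}\rangle_{H^2},
\]
with the last equality using that $\abf{1}_\alpha k_{(0,z_2)}\in H^2(\mathbb{D}^2)$ (both factors are bounded and holomorphic on $\mathbb{D}^2$). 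Adding $(S^1_\phi f)(z)$ produces the stated formula. I should note that the correction term depends on $z$ only through $z_2$, because $P_{H^2_2}$ annihilates all $z_1$-dependence; this is consistent with the kernel $k_{(0,z_2)}$ and suggests that the $k_{(z_2,0)}$ appearing in the statement is intended to denote precisely this one-variable kernel.

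The only genuinely nonroutine ingredient is the containment $H^2_2\subseteq K_\phi$; everything else is reproducing-kernel bookkeeping once Theorem \ref{thm:U1alpha} and the backward shift identities from Proposition \ref{prop:V1alpha} are in hand.
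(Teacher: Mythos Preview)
Your proof is correct and follows essentially the same route as the paper's: first use $\phi(0,z_2)\equiv 0$ to deduce $H^2_2\subseteq K_\phi$ (the paper asserts this without the Fourier-support justification you supply), then drop $P_\phi$ from \eqref{eqn:Ualpha1} and evaluate $P_{H^2_2}M_{\overline{\abf{1}_\alpha}}f$ via the reproducing kernel $k_{(0,z_2)}$. Your remark that the statement's $k_{(z_2,0)}$ should read $k_{(0,z_2)}$ is also on point.
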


\begin{proof} If $\phi(0,z_2) \equiv 0$, then $H_2^2 \subseteq K_\phi$ and so, we can simplify  $U_\alpha^1$ to
\[ U_\alpha^1 = S_\phi^1  +  P_{H^2_2} M_{\overline{\abf{1}_\alpha}}.\]
 To see the desired formula, fix $f \in K_\phi$ and $z_2 \in \mathbb{D}$. Then
\[ 
\begin{aligned}
( P_{H^2_2} M_{\overline{\abf{1}_\alpha}} f)( z_2) &= \langle P_{H^2_2} M_{\overline{\abf{1}_\alpha}} f, k_{(0, z_2)} \rangle_{H^2} \\
& = \langle M_{\overline{\abf{1}_\alpha}} f, k_{(0, z_2)} \rangle_{L^2}  \\
& = \langle f, \abf{1}_\alpha k_{(0, z_2)} \rangle_{H^2}, 
\end{aligned}
\]
which completes the proof.
\end{proof}

We can rewrite the formula in Corollary \ref{cor:cross1} in the following particularly clear way:

\begin{cor} \label{cor:crossformula} Let $J_\alpha: K_\phi \rightarrow L^2(\sigma_\alpha)$ be surjective and $\abf{1}_\alpha$ bounded on $\mathbb{D}^2$. Assume $\phi(0,z_2) \equiv 0$, so $\phi = z_1 \psi$ for some inner function $\psi$. Then 
\[ K_\phi = K_\psi \oplus \psi H^2_2\]
and for each $f \in K_\phi$, we can write $f = f_1 + \psi f_2$ for $f_1 \in K_\psi$ and $f_2 \in H^2_2.$ Then $U_\alpha^1$ is given by
\[ U_\alpha^1 f = z_1 f_1 + \alpha f_2.\]
\end{cor}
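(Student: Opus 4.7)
The plan is to first establish the orthogonal decomposition $K_\phi = K_\psi \oplus \psi H^2_2$ and then evaluate Corollary \ref{cor:cross1} on a generic element $f = f_1 + \psi f_2$.

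For the decomposition, the key observation is that $\phi = z_1\psi$ is inner and $|z_1| = 1$ on $\T^2$, so $\psi$ is inner as well; this gives the orthogonal splitting $H^2(\D^2) = K_\psi \oplus \psi H^2(\D^2)$. I then split $H^2(\D^2) = H^2_2 \oplus z_1 H^2(\D^2)$ and use that multiplication by the inner function $\psi$ is isometric on $L^2(\T^2)$, hence preserves orthogonality, to obtain
\[ \psi H^2(\D^2) = \psi H^2_2 \oplus z_1 \psi H^2(\D^2) = \psi H^2_2 \oplus \phi H^2(\D^2). \]
Removing the $\phi H^2(\D^2)$ summand from $H^2(\D^2)$ yields the desired decomposition of $K_\phi$.

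Next, since $\phi(0,z_2) \equiv 0$, the expression \eqref{eqn:psi_alpha} simplifies: $(B_1\phi)(z) = \phi(z)/z_1 = \psi(z)$, so $\abf{1}_\alpha = \bar\alpha\psi$. I then apply Corollary \ref{cor:cross1} to $f = f_1 + \psi f_2$ and compute each summand separately. For the compressed shift piece, $S_\phi^1 f_1 = P_\phi(z_1 f_1) = z_1 f_1$ because $z_1 K_\psi \subset K_\phi$ (a direct consequence of the decomposition above, or verified via $\langle z_1 g, z_1 \psi h\rangle = \langle g, \psi h\rangle = 0$), while $S_\phi^1(\psi f_2) = P_\phi(\phi f_2) = 0$. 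For the perturbation term $\alpha\langle f, \psi k_{(0,z_2)}\rangle_{H^2}$, the piece $\langle f_1, \psi k_{(0,z_2)}\rangle_{H^2}$ vanishes because $f_1 \in K_\psi \perp \psi H^2(\D^2)$, whereas $\langle \psi f_2, \psi k_{(0,z_2)}\rangle_{H^2} = \langle f_2, k_{(0,z_2)}\rangle_{H^2} = f_2(z_2)$ by the isometry property of $M_\psi$. Assembling these pieces gives $U_\alpha^1 f = z_1 f_1 + \alpha f_2$, as claimed.

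No step presents a serious obstacle; the only point requiring care is verifying that the three summands $K_\psi$, $\psi H^2_2$, and $\phi H^2(\D^2)$ are genuinely mutually orthogonal and together exhaust $H^2(\D^2)$. Everything else is a direct unpacking of the earlier formulas once the explicit identification $\abf{1}_\alpha = \bar\alpha\psi$ is in hand.
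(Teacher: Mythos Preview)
Your proof is correct and follows essentially the same approach as the paper: both establish the orthogonal decomposition $K_\phi = K_\psi \oplus \psi H^2_2$, identify $\abf{1}_\alpha = \bar\alpha\psi$, and then compute $S_\phi^1$ and the perturbation term separately on the two summands. The only cosmetic difference is that the paper works with the operator form $P_{H^2_2} M_{\overline{\abf{1}_\alpha}}$ and tests against $h\in H^2_2$, whereas you use the equivalent inner-product formula from Corollary~\ref{cor:cross1} and the reproducing kernel $k_{(0,z_2)}$; the computations are otherwise identical.
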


\begin{proof} By looking at the reproducing kernels of the underlying spaces and the fact that the spaces in each decomposition are orthogonal, we can conclude that 
\begin{equation} \label{eqn:decompcross} K_\phi = K_\psi \oplus \psi H^2_2 = H^2_2 \oplus z_1 K_\psi. \end{equation}
Furthermore, the proof of Corollary \ref{cor:cross1} gives 
\[ U_\alpha^1 = S_\phi^1  +  P_{H^2_2} M_{\overline{\abf{1}_\alpha}}.\]
Now fix $f \in K_\phi$ and write $f = f_1 + \psi f_2$ for $f_1 \in K_\psi$ and $f_2 \in H^2_2.$
We first compute $S_\phi^1 f$. To that end, note that \eqref{eqn:decompcross} gives $z_1 f_1 \in z_1 K_\psi \subseteq K_\phi.$ Thus, $S_\phi^1 f_1 = z_1 f_1$. Similarly, $S_\phi^1 (\psi f_2) = P_\phi (z_1 \psi f_2) =0$ since $z_1 \psi f_2 \perp H^2_2$ and $z_1 \psi f_2 \perp z_1 K_\psi$. Combining those formulas gives 
\[S_\phi^1 f = S_\phi^1 (f_1 + \psi f_2)=  z_1 f_1. \]
Now we compute $P_{H^2_2} M_{\overline{\abf{1}_\alpha}} f$. Again we write $f = f_1 + \psi f_2$ for $f_1 \in K_\psi$ and $f_2 \in H^2_2.$ Note that from \eqref{eqn:psi_alpha}, $\abf{1}_{\alpha}(z) = \overline{\alpha}\psi$. Let $h\in H_2^2$ and compute
\begin{align*}
    \left \langle P_{H^2_2} M_{\overline{\abf{1}_\alpha}} f_1, h \right \rangle_{H^2_2} &= \left \langle  M_{\overline{\abf{1}_\alpha}} f_1, h \right \rangle_{L^2} \\
    &= \left \langle  \overline{\abf{1}_\alpha} f_1, h \right \rangle_{L^2} \\
    &= \left \langle  \alpha \, \overline{\psi} \, f_1, h \right \rangle_{L^2} \\
    & =  \alpha \left \langle\, f_1,  \psi h \right \rangle_{L^2}.
\end{align*}

Since $f_1 \in K_\psi$ and $\psi h \in \psi H^2_2$, we get $P_{H^2_2} M_{\overline{\abf{1}_\alpha}} f_1 = 0$. Next, we compute $P_{H^2_2} M_{\overline{\abf{1}_\alpha}} (\psi f_2)$. For that note $P_{H^2_2} M_{\overline{\abf{1}_\alpha}} (\psi f_2) = P_{H^2_2} \overline{\abf{1}_\alpha} \psi f_2 = P_{H^2_2} (\alpha \, \overline{\psi} \, \psi f_2) = P_{H^2_2} (\alpha f_2) = \alpha f_2$ as $f_2 \in H^2_2.$ Thus $P_{H^2_2} M_{\overline{\abf{1}_\alpha}} f = \alpha f_2$.  Combining the above two computations, we get the desired formula for $U_\alpha^1$.
\end{proof}

\begin{rem*} In Corollary \ref{cor:crossformula}, the range of $P_{H^2_2} M_{\overline{\abf{1}_\alpha}}$ is all of $H^2_2$. Thus, in this case, it is immediate that $U_\alpha^1$ gives us an infinite-rank unitary perturbation of the compressed shift $S_\phi^1$.
\end{rem*}

Recall the definition of generic and exceptional $\alpha $ from Subsection \ref{subsec:RIFClark}.
\begin{cor} \label{cor:RIFUalpha} Let $\phi$ be a rational inner function on $\mathbb{D}^2$ with $\deg_j \phi  \ge 1$ for $j=1,2$ and let $\alpha \in \mathbb{T}$ be generic for $\phi$. Then $J_\alpha$ is surjective, $\abf{1}_\alpha$ is bounded, and $U^1_\alpha = J_\alpha^* M_{z_1} J_\alpha$, where 
\[ U^1_\alpha  = S_\phi^1  + P_\phi P_{H^2_2} M_{\overline{\abf{1}_\alpha}}.\]
\end{cor}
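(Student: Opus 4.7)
The corollary asserts three things---surjectivity of $J_\alpha$, boundedness of $\abf{1}_\alpha$, and the closed-form expression for $U^1_\alpha$---and my plan is to verify the first two so that Theorem \ref{thm:U1alpha} immediately delivers the third. Surjectivity is exactly Theorem \ref{theorem:poltoratski}, so the substantive task is proving that $\abf{1}_\alpha$ is bounded on $\mathbb{D}^2$.

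Since $\phi \in H^\infty(\mathbb{D}^2)$, applying Schwarz's lemma in $z_1$ to $\phi - \phi(0,\cdot)$ yields $B_1 \phi \in H^\infty(\mathbb{D}^2)$. It therefore suffices to show that $1 - \bar{\alpha}\,\phi(0, z_2)$ is bounded away from zero on $\mathbb{D}$. Because $\phi = \tilde{p}/p$ is an atoral rational inner function, the stable polynomial $p$ has no zeros in $\overline{\mathbb{D}^2}\setminus \mathbb{T}^2$; in particular $p(0, \cdot)$ is non-vanishing on $\overline{\mathbb{D}}$, so that $z_2 \mapsto \phi(0, z_2)$ extends continuously to $\overline{\mathbb{D}}$. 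By compactness, the desired lower bound reduces to proving that $\phi(0, z_2) \neq \alpha$ for every $z_2 \in \overline{\mathbb{D}}$.

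I would establish this non-attainment with a two-case maximum-modulus argument. Interior case: if $\phi(0, z_2^\circ) = \alpha$ for some $z_2^\circ \in \mathbb{D}$, then the slice $w \mapsto \phi(0, w)$ from $\mathbb{D}$ into $\overline{\mathbb{D}}$ attains the unimodular value $\alpha$ at an interior point and must therefore be the constant $\alpha$; repeating the same reasoning in the first variable forces $\phi \equiv \alpha$, whence $\mathcal{C}_\alpha = \mathbb{T}^2$, contradicting the genericity of $\alpha$. Boundary case: if $\phi(0, \zeta_2) = \alpha$ for some $\zeta_2 \in \mathbb{T}$, then the slice $z_1 \mapsto \phi(z_1, \zeta_2)$ is a one-variable inner function on $\mathbb{D}$ (a finite Blaschke product, using that $p$ does not vanish on $\mathbb{D}\times\{\zeta_2\}$) which attains the unimodular value $\alpha$ at the interior point $z_1 = 0$, and is therefore identically $\alpha$; but then $\mathbb{T} \times \{\zeta_2\} \subset \mathcal{C}_\alpha$, again contradicting genericity. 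With boundedness of $\abf{1}_\alpha$ secured, Theorem \ref{thm:U1alpha} supplies both the identity $U^1_\alpha = J_\alpha^* M_{z_1} J_\alpha$ and the formula $U^1_\alpha = S^1_\phi + P_\phi P_{H^2_2} M_{\overline{\abf{1}_\alpha}}$.

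The main obstacle is the boundary case: one must invoke the structural fact that the zeros of a stable atoral polynomial in $\overline{\mathbb{D}^2}$ lie only on $\mathbb{T}^2$, so that the slices $z_1 \mapsto \phi(z_1, \zeta_2)$ are well-defined holomorphic---and, indeed, inner---functions on $\mathbb{D}$ for every $\zeta_2 \in \mathbb{T}$. This is exactly the kind of extension/regularity statement made available by \cite{Kne15, BKPS} and recalled in Subsection \ref{subsec:RIFClark}.
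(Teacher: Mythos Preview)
Your proposal is correct and follows essentially the same approach as the paper: reduce boundedness of $\abf{1}_\alpha$ to showing $1-\bar\alpha\phi(0,\cdot)$ is bounded away from zero, use continuity of $\phi$ on $\{0\}\times\overline{\mathbb D}$ to locate a putative $\zeta_2\in\mathbb T$ with $\phi(0,\zeta_2)=\alpha$, and then apply the maximum modulus principle to the finite Blaschke product $z_1\mapsto\phi(z_1,\zeta_2)$ to force $\mathbb T\times\{\zeta_2\}\subset\mathcal C_\alpha$, contradicting genericity. The only difference is that you treat the interior case $z_2^\circ\in\mathbb D$ explicitly, whereas the paper's sequential argument implicitly dispenses with it (since $|\phi|<1$ on $\mathbb D^2$ for nonconstant $\phi$, any sequence $(z_n)\subset\mathbb D$ with $\phi(0,z_n)\to\alpha$ must accumulate on $\mathbb T$); your treatment is slightly redundant but not incorrect.
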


\begin{proof} Since $\alpha$ is generic for $\phi$,  Theorem \ref{theorem:poltoratski}  implies that $J_\alpha$ is unitary. Since $\phi$ is bounded on $\mathbb{D}^2$, so is $B_1 \phi$. If the function
\[ \frac{1}{1- \bar{\alpha} \phi(0, z_2)}\]
were not bounded on $\mathbb{D}$, then there would exist a $\tau \in \mathbb{T}$ and a sequence $(z_n) \subseteq \mathbb{D}$ converging to $\tau$ such that $\phi(0, z_n)$ converged to $\alpha.$ Since $\phi$ does not have any singularities on $\mathbb{D} \times \mathbb{T}$, we can conclude that  $\phi(0, \tau) = \alpha.$ Consider the finite Blaschke product
$\phi_\tau(z_1) := \phi(z_1, \tau)$.  Since $\phi_\tau(0) = \alpha$, this means $\phi_\tau$ must be identically $\alpha$. Thus, the set $\mathbb{T} \times \{\tau\}$ is contained in the level set $\mathcal{C}_\alpha$. This implies that $\alpha$ is not generic for $\phi$, a contradiction. Thus, $\abf{1}_\alpha$ must be bounded on $\mathbb{D}^2$ and the rest of the statement follows from Theorem \ref{thm:U1alpha}.
\end{proof}

Note that in Corollary \ref{cor:crossformula}, the projection $P_\phi$ is unnecessary in the second term in $U_\alpha^1$. In contrast, the following proposition shows that $P_\phi$ is often required.

\begin{prop} \label{prop:ptheta} Let $J_\alpha: K_\phi \rightarrow L^2(\sigma_\alpha)$ be surjective and let $\abf{1}_\alpha$ be bounded on $\mathbb{D}^2$, with $\frac{1}{1- \bar{\alpha} \phi(0, z_2)} \in H^2_2.$ Assume that $\phi(0, z_2) \not \equiv 0$ and $\phi$ depends on both $z_1$ and $z_2$. Then $P_\phi$  is needed in the $U_\alpha^1$ formula from \eqref{eqn:Ualpha1}. 
\end{prop}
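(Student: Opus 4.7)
The plan is to argue by contradiction and reduce the failure of the formula to an analyticity question on $\D^2$. Suppose the projection $P_\phi$ in \eqref{eqn:Ualpha1} could be omitted, so that $P_{H^2_2} M_{\overline{\abf{1}_\alpha}}$ already mapped $K_\phi$ into itself; then its range, which lies in $H^2_2$ tautologically, would have to lie inside the intersection $H^2_2 \cap K_\phi$.

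The first step is to identify $H^2_2 \cap K_\phi$. Writing $\phi_0(z_2) := \phi(0,z_2)$, a direct calculation gives $\langle g, \phi h\rangle_{H^2(\D^2)} = \langle g, \phi_0 \cdot h(0,\cdot)\rangle_{H^2_2}$ for every $g\in H^2_2$ and $h\in H^2(\D^2)$, so that $H^2_2\cap K_\phi = H^2_2\ominus \overline{\phi_0 H^2_2}$. The standing assumption $\phi_0\not\equiv 0$ then makes this a \emph{proper} closed subspace of $H^2_2$, and this is the key feature to be exploited.

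Next I would unpack the range condition as a multiplier identity on $\abf{1}_\alpha \phi_0$. For every $f\in K_\phi$ and every $g\in H^2_2$, the contradiction hypothesis gives $\langle P_{H^2_2} M_{\overline{\abf{1}_\alpha}} f, \phi_0 g\rangle_{H^2_2} = 0$, which reorganises into $\langle f, \abf{1}_\alpha \phi_0 g\rangle_{L^2(\T^2)} = 0$. The boundedness of $\abf{1}_\alpha$ together with the assumption $1/(1-\bar{\alpha}\phi_0)\in H^2_2$ is enough to place $\abf{1}_\alpha \phi_0 g$ in $H^2(\D^2)$, so the orthogonality forces $\abf{1}_\alpha \phi_0 g \in \phi H^2(\D^2)$ for every $g\in H^2_2$. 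Specialising to $g\equiv 1$ and using $(B_1\phi)(z)\,z_1 = \phi(z) - \phi_0(z_2)$ to simplify, one concludes that
\[
   H(z) \;:=\; \frac{\abf{1}_\alpha(z)\,\phi_0(z_2)}{\phi(z)} \;=\; \frac{\bar{\alpha}\,\phi_0(z_2)\,(B_1\phi)(z)}{\phi(z)\bigl(1-\bar{\alpha}\phi_0(z_2)\bigr)} \;\in\; H^2(\D^2).
\]

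The main obstacle is then to refute this last inclusion using the two remaining hypotheses, namely that $\phi_0\not\equiv 0$ and that $\phi$ depends on $z_1$. The natural route is a singularity analysis on $\D^2$: one locates an interior point $(w_1,w_2)\in \D^2$ with $w_1\ne 0$, $\phi(w_1,w_2)=0$, and $\phi_0(w_2)\ne 0$; at such a point the relation $\phi - \phi_0 = z_1 (B_1\phi)$ forces $(B_1\phi)(w_1,w_2) = -\phi_0(w_2)/w_1 \ne 0$, so the numerator of $H$ is nonzero while the denominator vanishes, contradicting the analyticity of $H$ on $\D^2$. For rational inner $\phi$ (the case of primary interest in the paper) the existence of such a point is immediate from the representation $\phi = c\,z_2^{m_2}\tilde p/p$ with $m_1 = 0$ (forced by $\phi_0\not\equiv 0$) and $\deg_{z_1}\tilde p\geq 1$ (forced by $z_1$-dependence); checking the analogous conclusion under the abstract inner-function hypotheses stated in the proposition is the delicate point of the argument and is where I expect the real work to lie.
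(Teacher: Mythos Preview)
Your contradiction-via-divisibility approach is correct up to the last step and is genuinely different from the paper's argument. The identification $H^2_2\cap K_\phi = H^2_2\ominus\overline{\phi_0 H^2_2}$ is right, and the deduction that the contradiction hypothesis forces $\abf{1}_\alpha\,\phi_0\in\phi H^2(\D^2)$ is clean. Evaluating $\abf{1}_\alpha\phi_0=\phi h$ at an interior zero $(w_1,w_2)$ of $\phi$ with $w_1\ne 0$ and $\phi_0(w_2)\ne 0$ does give the desired contradiction.

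The gap you flag is real, however, and it is not just a matter of tidying details. Under the abstract hypotheses of the proposition $\phi$ need not have any zeros in $\D^2$ (zero-free inner functions exist already in one variable and persist under compositions such as $\psi(z_1z_2)$), and in that case your singularity argument has nothing to grab onto: $\abf{1}_\alpha\phi_0/\phi$ is then holomorphic on all of $\D^2$, and ruling out membership in $H^2(\D^2)$ would require a completely different mechanism (there is no inner--outer factorization available on the bidisk). Even in the RIF case your ``immediate'' is too quick: one still needs a Hurwitz-type argument, perturbing a zero of the nonconstant finite Blaschke product $\phi(\,\cdot\,,\zeta_2)$ for generic $\zeta_2\in\T$ with $\phi_0(\zeta_2)\ne 0$ slightly into $\D^2$, to manufacture the required point.

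The paper bypasses zeros of $\phi$ entirely. It invokes the Agler decomposition
\[
k^\phi_w(z)=\frac{K_1(z,w)}{1-z_1\bar w_1}+\frac{K_2(z,w)}{1-z_2\bar w_2},
\]
valid for every Schur function on $\D^2$, and takes $g_w(z)=\bar w_1 K_2(z,w)/(1-z_2\bar w_2)\in K_\phi$ as an explicit witness. Rewriting the decomposition as an identity for $T_1g_w$ and pushing through the inner products gives
\[
P_{H^2_2} M_{\overline{\abf{1}_\alpha}} g_w
=\alpha\,\frac{\overline{\phi(w)}-\overline{\phi_0(w_2)}}{1-\alpha\overline{\phi_0(w_2)}}\;k_{(0,w_2)}.
\]
Now one only needs $w\in\D^2$ with $\phi_0(w_2)\ne 0$ and $\phi(w)\ne\phi_0(w_2)$, which follows directly from the two hypotheses $\phi_0\not\equiv 0$ and $z_1$-dependence of $\phi$; no zero of $\phi$ is required. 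The constant in front is nonzero and $k_{(0,w_2)}\notin K_\phi$ (since $\langle\phi,k_{(0,w_2)}\rangle=\phi_0(w_2)\ne 0$), so $P_\phi$ is needed. Thus the paper's constructive route, leaning on Agler kernels, handles the full abstract statement where your zero-set approach stalls.
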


\begin{proof} As was explained in Subsection \ref{aglerdec} (see also \cite{BickelLiaw}, for example), there exist positive kernels $K_1, K_2\colon \mathbb{D}^2 \times \mathbb{D}^2 \rightarrow \mathbb{C}$ such that for all $z, w\in \mathbb{D}^2,$
\begin{equation} \label{eqn:agler} k^\phi_w(z) =  \frac{1-\phi(z) \overline{\phi(w)}}{(1-z_1 \bar{w}_1)(1-z_2 \bar{w}_2)} = \frac{K_1(z,w)}{1-z_1 \bar{w}_1}+ \frac{K_2(z,w)}{1-z_2 \bar{w}_2}.\end{equation}
Then, for each $w\in \mathbb{D}^2$, $g_w(z) := \frac{ \bar{w}_1 K_2(z,w)}{1-z_2 \bar{w}_2} \in K_\phi$ and after multiplying through by $1-z_1\bar{w}_1$, we can rewrite \eqref{eqn:agler} as
\begin{equation} \label{eqn:agler2}  (T_1 g_w)(z) = z_1 g_w(z) = \frac{K_2(z,w)}{1-z_2 \bar{w}_2} + K_1(z,w)+ \phi(z) \overline{\phi(w)}k_{(0,w_2)}(z) - k_{(0,w_2)}(z). \end{equation}
Because $\phi$ depends on both $z_1, z_2$ and does not vanish identically when $z_1 =0$, we can choose $w = (w_1, w_2) \in \mathbb{D}^2$ so that 
\[ \phi(0, w_2) \ne 0 \ \ \text{ and } \ \ \phi( w_1, w_2) \ne \phi(0, w_2).\]
Since $\phi(0, w_2) \ne 0$, $\langle \phi, k_{(0, w_2)} \rangle_{H^2} \ne 0$ and so $k_{(0, w_2)} \not \in K_\phi$. To show that $P_\phi$ is needed in $U_\alpha^1$, we will show that there is a constant $c \ne 0$ so that $P_{H^2_2} M_{\overline{\abf{1}_\alpha}} g_w = c k_{(0, w_2)}$.

To that end, fix $\zeta \in \mathbb{D}$ and observe that 
\[ 
\begin{aligned}
\left( P_{H^2_2} M_{\overline{\abf{1}_\alpha}} g_w \right) (0, \zeta)  &= \left \langle P_{H^2_2} M_{\overline{\abf{1}_\alpha}} g_w, k_{(0, \zeta)} \right \rangle_{H^2_2} \\
& = \left \langle M_{\overline{\abf{1}_\alpha}} g_w, k_{(0, \zeta)} \right \rangle_{L^2} \\
& = \left \langle g_w, \bar{\alpha} \frac{B_1\phi}{1- \bar{\alpha}\phi(0, \cdot)} k_{(0, \zeta)} \right \rangle_{H^2} \\
 & = \alpha \left \langle g_w, T_1^*\left(\frac{\phi}{1- \bar{\alpha}\phi(0, \cdot)} k_{(0, \zeta)} \right) \right \rangle_{H^2}\\
& = \alpha \left \langle T_1 g_w, \frac{\phi}{1- \bar{\alpha}\phi(0, \cdot)} k_{(0, \zeta)} \right \rangle_{H^2}.\end{aligned}\]

Using \eqref{eqn:agler2} and the fact that $\frac{\phi}{1- \bar{\alpha}\phi(0, \cdot)} k_{(0, \zeta)} \in \phi H^2$ and $\frac{K_2(z,w)}{1-z_2 \bar{w}_2}, K_1(z,w) \in K_\phi$, we can conclude that
\[ 
\begin{aligned}
&\left( P_{H^2_2} M_{\overline{\abf{1}_\alpha}} g_w \right) (0, \zeta) 
= \alpha \left \langle \phi \overline{\phi(w)}k_{(0,w_2)} - k_{(0,w_2)}, \frac{\phi}{1- \bar{\alpha}\phi(0, \cdot)} k_{(0, \zeta)} \right \rangle_{H^2} \\
& =  \alpha \overline{\phi(w)} \left \langle k_{(0,w_2)}, \frac{1}{1- \bar{\alpha}\phi(0, \cdot)} k_{(0, \zeta)} \right \rangle_{H^2} -\alpha \left \langle k_{(0,w_2)}, \frac{\phi}{1- \bar{\alpha}\phi(0, \cdot)} k_{(0, \zeta)} \right \rangle_{H^2}\\
& = \alpha \left( \frac{\overline{\phi(w_1, w_2)} - \overline{\phi(0,w_2)}}{1- \alpha\overline{\phi(0, w_2)}} \right) k_{(0, w_2)}(0, \zeta) \\
& = c k_{(0, w_2)}(0, \zeta), 
\end{aligned}\]
for some $c \ne 0.$ Since $k_{(0, w_2)} \not \in K_\phi,$  the projection  $P_\phi$ is required to ensure that the operator $U_\alpha^1$ maps into $K_\phi$.
\end{proof}

\begin{rem} \label{rem:RIFthms} It is worth noting that if $\phi$ is a rational inner function that both depends on $z_1$ and $z_2$ and does not have a factor of $z_1$, then we can apply Proposition \ref{prop:ptheta} (and Theorem \ref{thm:perturbissmall} below)  to $\phi$ for all of its generic $\alpha \in \mathbb{T}.$ In particular, as discussed in Corollary \ref{cor:RIFUalpha} and its proof, $J_\alpha$ is surjective and $\frac{1}{1- \bar{\alpha} \phi(0, z_2)}$ is bounded and hence in $H^2_2$. For more details about an example of a $U_\alpha^1$ requiring $P_\phi$, see Subsection \ref{ex:Bp}.
 \end{rem}

While the perturbations needed to complete a compressed shift $S_1^{\phi}$ to a unitary are often infinite-rank operators, they are not too large in the following sense.

\begin{thm}\label{thm:perturbissmall}
Let $J_\alpha: K_\phi \rightarrow L^2(\sigma_\alpha)$ be surjective and let $\abf{1}_\alpha$ be bounded on $\mathbb{D}^2$, with $\frac{1}{1- \bar{\alpha} \phi(0, z_2)} \in H^2_2.$ As in \eqref{maxmindecomp}, write $K_{\phi}=S_1^{\mathrm{max}}\oplus S^{\mathrm{min}}_2$. Then
\[P_{\phi}P_{H^2_2}M_{\overline{\vartheta_{\alpha}}}\equiv0 \quad \textrm{on}\quad S_1^{\mathrm{max}}.\]    
\end{thm}

\begin{proof} Fix $f \in S_1^{\mathrm{max}}$ and $w_2 \in \mathbb{D}.$ By our assumptions 
\[ g_{w_2}(z_2):=\frac{1}{1- \bar{\alpha} \phi(0, z_2)}\frac{1}{1- \bar{w}_2z_2} \in H^2_2.\]
Proposition 3.5 in \cite{bk12}, which says $(B_1 \phi) H^2_2 \subseteq S^{\mathrm{min}}_2$, implies that 
\[ h_{w_2}(z):=\frac{\abf{1}_\alpha(z)}{1- \bar{w}_2z_2} = \bar{\alpha} (B_1 \phi)(z)g_{w_2}(z_2) \in S^{\mathrm{min}}_2. \]
From this, we can conclude that 
\[
\begin{aligned}
\left(P_{H^2_2}M_{\overline{\vartheta^1_{\alpha}}}f \right)(0, w_2)& =  
\left \langle P_{H^2_2} M_{\overline{\abf{1}_\alpha}} f, k_{(0, w_2)} \right \rangle_{H^2_2} \\
& =  
\left \langle M_{\overline{\abf{1}_\alpha}} f, k_{(0, w_2)} \right \rangle_{L^2} \\
& = \left \langle  f, h_{w_2} \right \rangle_{H^2} =0,
\end{aligned}
\]
since $S_1^{\mathrm{max}} \perp_{H^2} S^{\mathrm{min}}_2.$ This immediately implies that 
\[ P_{\phi}P_{H^2_2}M_{\overline{\vartheta_{\alpha}}} f =P_{\phi} 0 =0, \]
which completes the proof.    
\end{proof}

\section{RIF Level Sets and Taylor Joint Spectrum}\label{sec:spec}

We are interested in the Taylor joint spectrum of the shift operators $(M_{z_1}, M_{z_2})$  on a given space of functions $L^2(\mu)$. 

To that end, we need to establish some notation. First, let $\mathrm{supp}\,\mu$ denote the support of $\mu$. 
Now, fix $(\lambda_1, \lambda_2)  \in \mathbb{T}^2$ and write $(\lambda_1, \lambda_2) = (e^{i \tau_1}, e^{i \tau_2})$ with $-\pi < \tau_j \le \pi$ for $j=1,2$. For $\epsilon >0$ and $j=1,2$, define the sets
\[ 
\begin{aligned}
V^j_\epsilon&:= V^j_\epsilon(\lambda_1, \lambda_2) = \left \{ (e^{i \theta_1}, e^{i \theta_2}) : \tau_j - \tfrac{\epsilon}{2} < \theta_j < \tau_j + \tfrac{\epsilon}{2}  \right \}, \\
V_\epsilon &:= V_\epsilon(\lambda_1, \lambda_2)  = V^1_\epsilon \cap V^2_\epsilon.
\end{aligned}
 \]
For simplicity, we assume that $\lambda_1, \lambda_2 \ne -1$ and $\epsilon$ is chosen sufficiently small so that $[\tau_j-\tfrac{\epsilon}{2} , \tau_j+\tfrac{\epsilon}{2}] \subseteq (-\pi, \pi]$ for $j=1,2.$ Slight modifications handle the case when $\lambda_j = -1.$

Then we have the following theorem, whose content is certainly well-known to experts. 

\begin{thm} \label{thm:jointspectrum}
Let $\mu$ be a finite positive Borel measure on $\mathbb{T}^2$ and assume that $\mu$ behaves like Lebesgue measure in the following sense:
there is a dense subset $\mathcal{S}$ of $\mathrm{supp}\,\mu$ such that for each $(\lambda_1, \lambda_2) \in \mathcal{S}$, there is 
some $c >0$ such that for $\epsilon >0$ sufficiently small, $\mu ( V_\epsilon(\lambda_1, \lambda_2) ) \ge c \epsilon^2.$ 

Let $M_{z_1}$, $M_{z_2}$ denote multiplication by $z_1, z_2$ on $L^2(\mu)$. Then  $\tay(M_{z_1}, M_{z_2}) =  \mathrm{supp}\,\mu.$
\end{thm}

\begin{rem}\label{r-Vasi}
It was pointed out 
to us by Michael Hartz that the same conclusion, namely that $\tay(M_{z_1}, M_{z_2})=\mathrm{supp}\,\mu$, holds without the additional hypothesis on $\mu$, as can be seen by appealing for instance to a result of Vasilescu (see \cite[Theorem 1.1]{vas}). We have opted to present a direct proof so as to keep our paper self-contained and highlight some specific properties of Clark measures associated to rational inner functions. 
\end{rem}

\begin{proof} For the first part of the proof, we will establish that $\tay(M_{z_1}, M_{z_2}) \subseteq  \mathrm{supp}\,\mu$ by showing
if $(\lambda_1, \lambda_2) \not \in \mathrm{supp}\, \mu$,
 then $(\lambda_1, \lambda_2) \not \in \tay(M_{z_1}, M_{z_2})$. This means that we need to show that $ \text{Ker}( \delta_n)  \subseteq \text{Ran} (\delta_{n-1})$ for $n=1, 2, 3$. (For the definitions of $\delta_1, \delta_2, \delta_3,$ see Subsection \ref{subsec:Tspec}.) 
 
For $n=1$, note that if $h \in  \text{Ker}( \delta_1)$, then 
\[ h(z_1, z_2) (z_1 - \lambda_1) = h(z_1,z_2)(z_2-\lambda_2) =0\]
for $\mu$ a.e. $z$. Then $h(z_1, z_2) =0$ for $\mu$-a.e. $(z_1, z_2) \in \mathbb{T}^2 \setminus \{(\lambda_1, \lambda_2)\}$. 
Since  $(\lambda_1, \lambda_2) \not \in \mathrm{supp}\,\mu$, 
this implies that $h\equiv 0$ in $L^2(\mu)$, so $h \in \text{Ran} (\delta_{0}) =\{0\}.$

For $n=2$, assume that $(f_1, f_2) \in \text{Ker}( \delta_2)$. Then
\begin{equation} \label{eqn:kerdelta2} (z_1-\lambda_1) f_2(z_1, z_2) =  (z_2-\lambda_2) f_1(z_1, z_2)\end{equation}
for $\mu$ a.e. $(z_1, z_2) \in \mathbb{T}^2$.  Define 
\[ h(z_1, z_2) = \left\{ \begin{array}{ll} \tfrac{f_1(z_1, z_2)}{z_1 - \lambda_1} & \text{ for } (z_1, z_2) \in \mathbb{T}^2 \setminus ( \{\lambda_1\} \times \mathbb{T}) \\ 
\tfrac{f_2(z_1, z_2)}{z_2 - \lambda_2} & \text{ for } (z_1, z_2) \in  \mathbb{T}^2 \setminus ( \mathbb{T} \times \{\lambda_2\} ).  
\end{array} \right.\]
 By assumption \eqref{eqn:kerdelta2} and since $(\lambda_1, \lambda_2) \not \in \mathrm{supp}\,\mu,$ $h$ is well-defined $\mu$-a.e. on $\mathbb{T}^2$. Furthermore, note that for $\mu$-a.e. $(z_1, z_2) \in \mathbb{T}^2 \setminus ( \{\lambda_1\} \times \mathbb{T})$, the definition of $h$ implies 
\[ (M_{z_1} - \lambda_1) h(z_1, z_2) = f_1(z_1, z_2).\]
Meanwhile, for $\mu$-a.e. $(z_1, z_2) \in \mathbb{T}^2 \setminus (\mathbb{T} \times \{\lambda_2\})$, the definition of $h$ and \eqref{eqn:kerdelta2} implies that
\[ (M_{z_1} - \lambda_1) h(z_1, z_2) = \left(\frac{z_1-\lambda_1}{z_2-\lambda_2} \right) f_2(z_1, z_2) = f_1(z_1, z_2).\]
Similar arguments hold for $(M_{z_2} -\lambda_2)h$ and so
\begin{equation} \label{eqn:d1h} \left( (M_{z_1} - \lambda_1) h, (M_{z_2}-\lambda_2)h \right) = \left( f_1, f_2 \right)\end{equation}
$\mu$-a.e. Now, we just need to show that $h \in L^2(\mu)$. Once that is established, \eqref{eqn:d1h} will give
$\delta_1 (h) = \left( f_1, f_2 \right)$, so $(f_1, f_2) \in \text{Range}(\delta_1)$, as needed.

To show $h \in L^2(\mu)$,  note that since $(\lambda_1, \lambda_2) \not \in \mathrm{supp}\,\mu$, there is an $\epsilon>0$ so that $\mu(V_\epsilon(\lambda_1, \lambda_2) )=0$. To show $h \in L^2(\mu)$, define
\[ 
\begin{aligned}
S_1 &= \mathbb{T}^2 \setminus  V^1_\epsilon(\lambda_1, \lambda_2) \\
S_2 & = V^1_\epsilon(\lambda_1, \lambda_2)  \setminus V_\epsilon(\lambda_1, \lambda_2),   \end{aligned}
\]
so $\mathbb{T}^2 = S_1 \cup S_2 \cup V_\epsilon(\lambda_1, \lambda_2)$. 
Then choosing $\epsilon$ sufficiently small gives
\[ 
\begin{aligned} 
\| h\|^2_{L^2(\mu)} &= \int_{S_1} |h(z)|^2  d \mu +  \int_{S_2} |h(z)|^2 d \mu  + \int_{V_\epsilon(\lambda_1, \lambda_2)} |h(z)|^2 d \mu  \\
&= \int_{S_1} \left| \tfrac{f_1(z_1, z_2)}{z_1 - \lambda_1}  \right|^2 d \mu + \int_{S_2} \left| \tfrac{f_2(z_1, z_2)}{z_2 - \lambda_2}  \right|^2 d \mu + 0 \\
&\lesssim \frac{1}{\epsilon^2} \| f_1\|^2_{L^2(\mu)} + \frac{1}{\epsilon^2} \| f_2\|^2_{L^2(\mu)} 
< \infty,
\end{aligned}
\]
as needed. 

For the $n=3$ case, note that $\text{Ker}( \delta_3) = L^2(\mu)$. So, we need to fix $h \in L^2(\mu)$ and show that $h \in \text{Ran} (\delta_{2})$. This means that we need $f_1, f_2 \in L^2(\mu)$ such that
\begin{equation} \label{eqn:hdecomp} h(z_1, z_2) = (z_1 - \lambda_1)f_2(z_1, z_2) -  (z_2 -\lambda_2) f_1(z_1, z_2),\end{equation}
for $\mu$-a.e. $(z_1, z_2) \in \mathbb{T}^2$. Let 
\[ f_2 = \frac{h}{z_1 -\lambda_1} 1_{S_1} \text{ and  } f_1 = \frac{-h}{z_2 -\lambda_2} 1_{S_2},\] 
where $S_1$ and $S_2$ were defined in the $n=2$ step. By the definitions of $S_1$ and $S_2$ and the fact that $\mu(V_\epsilon(\lambda_1, \lambda_2)) =0$, we can conclude that \eqref{eqn:hdecomp} holds and $f_1, f_2 \in L^2(\mu)$. This completes the proof that if $(\lambda_1, \lambda_2) \not \in \mathrm{supp} \,\mu$, then  $(\lambda_1, \lambda_2) \not \in \tay (M_{z_1}, M_{z_2} )$.

For the other direction, first fix  $(\lambda_1, \lambda_2) \in \mathcal{S}$. Since $\mu$ is finite, $1 \in L^2(\mu)$. We will show that  $1 \not \in \text{Ran}(\delta_2)$ and hence, $L^2(\mu) = \text{Ker}(\delta_3) \ne \text{Ran}(\delta_2).$
By way of contradiction, assume that there exist $h_1, h_2 \in L^2(\mu)$ such that
\[ 1 = (z_1 - \lambda_1)h_2(z_1, z_2)  +  (z_2 - \lambda_2)h_1(z_1, z_2).\] 
for $\mu$ a.e. $(z_1, z_2) \in \mathbb{T}^2$.  First, if $\mu$
 has a point-mass at $(\lambda_1, \lambda_2)$ with weight $c \ne 0$, then $h_1, h_2$ would have to be well-defined at $(\lambda_1, \lambda_2)$ and so
 \[ c= \mu(\{(\lambda_1, \lambda_2)\}) = c (\lambda_1-\lambda_1) h_2(\lambda_1, \lambda_2) + c (\lambda_2-\lambda_2) h_1(\lambda_1, \lambda_2)=0, \]
a contradiction. 

Now assume that $\mu$ does not have a point-mass at $(\lambda_1, \lambda_2).$  Then for $\epsilon >0$ sufficiently small, 
\[ 
\begin{aligned}
\mu(V_\epsilon (\lambda_1, \lambda_2) ) &= \int_{V_\epsilon (\lambda_1, \lambda_2) }  |(z_1 - \lambda_1)h_2(z_1, z_2)  +  (z_2 - \lambda_2)h_1(z_1, z_2)|^2 d \mu \\
& \lesssim \epsilon^2   \int_{V_\epsilon (\lambda_1, \lambda_2) }  |h_1(z_1, z_2)|^2 + |h_2(z_1, z_2)|^2 d \mu \\
& = \epsilon^2  \left( \| h_1 1_{V_\epsilon (\lambda_1, \lambda_2) } \|^2_{L^2(\mu)} +  \| h_2 1_{V_\epsilon (\lambda_1, \lambda_2) }\|^2_{L^2(\mu)} \right),
\end{aligned}
 \]
 which combined with assumption (b) about $\mu$ implies that there is a $c>0$ with 
\[ c \epsilon^2 \lesssim \epsilon^2  \left( \| h_1 1_{V_\epsilon (\lambda_1, \lambda_2) } \|^2_{L^2(\mu)} +  \| h_2 1_{V_\epsilon (\lambda_1, \lambda_2) }\|^2_{L^2(\mu)}\right)\]
for $\epsilon$ sufficiently small.
Canceling the $\epsilon$ and using the dominated convergence theorem, this implies 
\[ c \lesssim \lim_{\epsilon \rightarrow 0}   \left( \| h_1 1_{V_\epsilon (\lambda_1, \lambda_2) } \|^2_{L^2(\mu)} +  \| h_2 1_{V_\epsilon (\lambda_1, \lambda_2) }\|^2_{L^2(\mu)}\right) =0,\]
since the $\lim_{\epsilon \rightarrow 0} |h_1(z) 1_{V_\epsilon (\lambda_1, \lambda_2) }(z)|^2 = 0 =  \lim_{\epsilon \rightarrow 0}|h_2(z) 1_{V_\epsilon (\lambda_1, \lambda_2) }(z)|^2$ for $\mu$-a.e. $z\in \mathbb{T}^2$ (where we used the fact that $\mu$ does not have a point mass at $(\lambda_1, \lambda_2)$).

This gives the contradiction and implies that $(\lambda_1, \lambda_2) \in \tay(M_{z_1}, M_{z_2})$. This implies that $ \tay(M_{z_1}, M_{z_2})$ contains a dense set in $\mathrm{supp}\,\mu$. Since the Taylor spectrum $ \tay(M_{z_1}, M_{z_2})$ is closed in $\mathbb{C}$, this implies that $\mathrm{supp}\,\mu \subseteq \tay(M_{z_1}, M_{z_2})$ and completes the proof.
\end{proof} 

To apply this to Clark measures $\sigma_{\alpha}$ associated to rational inner functions, we need the following lemma. Recall the definition of the level set of $\phi$, see Equation \eqref{e-Calpha}.

\begin{lm} \label{lem:genericepsilon} Let $\phi=\frac{\tilde{p}}{p}$ be a bidegree $(n_1,n_2)$ RIF with $n_1,n_2\geq 1$ and let $\alpha \in \T$ be generic for $\phi$. Fix $(\lambda_1, \lambda_2)$ in $\mathcal{C}_\alpha$ such that $(\lambda_1, \lambda_2)$ is not a singularity of $\phi$. Then, there is some $c >0$ such that for $\epsilon >0$ sufficiently small, 
\[ \sigma_\alpha ( V_\epsilon (\lambda_1, \lambda_2) ) \ge c \epsilon.\]
\end{lm}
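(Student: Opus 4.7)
The plan is to leverage the explicit slicing formula for $\sigma_\alpha$ from Theorem \ref{theorem:RIFclarkmeas}, which writes
\[\int_{\T^2} f\, d\sigma_\alpha = \sum_{j=1}^{n_2} \int_{\T} f(\xi, g_j^\alpha(\xi)) \frac{dm(\xi)}{|\partial\phi/\partial z_2(\xi, g_j^\alpha(\xi))|},\]
and then to estimate the contribution from just one branch. Since $(\lambda_1,\lambda_2)\in \mathcal{C}_\alpha$ and $\alpha$ is generic, one of the analytic parameterizing functions, say $g_{j_0}^\alpha$, satisfies $g_{j_0}^\alpha(\lambda_1)=\lambda_2$. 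Because $(\lambda_1,\lambda_2)$ is not a singularity of $\phi$, the function $\partial\phi/\partial z_2$ is continuous and nonzero at this point (the latter being implicit in the denominator above), so the weight $W(\xi):=1/|\partial\phi/\partial z_2(\xi, g_{j_0}^\alpha(\xi))|$ is continuous and bounded below by some $c_2>0$ on a $\T$-neighborhood of $\lambda_1$.

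Next, I convert the curve $\xi\mapsto(\xi,g_{j_0}^\alpha(\xi))$ into angle coordinates. Writing $\lambda_k=e^{i\tau_k}$ and choosing a continuous branch $\gamma(\theta) = \arg g_{j_0}^\alpha(e^{i\theta})$ with $\gamma(\tau_1)=\tau_2$, smoothness of $g_{j_0}^\alpha$ near $\lambda_1$ yields a Lipschitz bound $|\gamma(\theta)-\tau_2|\le M|\theta-\tau_1|$ for some $M>0$ and all $\theta$ close to $\tau_1$. Set $\delta_\epsilon:=\min\{\epsilon/2,\,\epsilon/(2M)\}$. For $\epsilon$ sufficiently small, whenever $|\theta-\tau_1|<\delta_\epsilon$ we have simultaneously $|\theta-\tau_1|<\epsilon/2$ and $|\gamma(\theta)-\tau_2|<\epsilon/2$, which means the point $(e^{i\theta},g_{j_0}^\alpha(e^{i\theta}))$ lies in $V_\epsilon(\lambda_1,\lambda_2)$.

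Now drop all branches except $j_0$ (all summands are nonnegative) and restrict the arc of integration to $|\theta-\tau_1|<\delta_\epsilon$. Approximating $\mathbf{1}_{V_\epsilon}$ from below by continuous test functions (and passing to the supremum, which is legitimate because $\sigma_\alpha$ is a finite regular Borel measure and the integrand in the slicing formula is nonnegative), I obtain
\[\sigma_\alpha(V_\epsilon(\lambda_1,\lambda_2)) \;\ge\; \int_{|\theta-\tau_1|<\delta_\epsilon} W(e^{i\theta})\,\frac{d\theta}{2\pi} \;\ge\; c_2\cdot\frac{\delta_\epsilon}{\pi} \;\ge\; c\,\epsilon,\]
for a suitable $c>0$ depending only on $M$ and $c_2$.

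The main obstacle here is really just bookkeeping: justifying that the formula from Theorem \ref{theorem:RIFclarkmeas}, which is stated for $f\in C(\T^2)$, legitimately furnishes a lower bound for $\sigma_\alpha$ applied to the (not-open, not-closed) rectangle $V_\epsilon$. This is a standard inner-regularity / monotone-convergence argument using continuous bump functions supported on the curve segment that sits inside $V_\epsilon$. A minor secondary point is ensuring that the chosen branch $g_{j_0}^\alpha$ really is smooth and non-degenerate at $\lambda_1$; this follows from the global analyticity of the parameterizing functions on $\T$ guaranteed by \cite[Theorem 2.8]{BPSII} together with the hypothesis that $(\lambda_1,\lambda_2)$ is not a singularity of $\phi$.
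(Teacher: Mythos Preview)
Your proof is correct and follows essentially the same route as the paper's: both invoke the slicing formula of Theorem~\ref{theorem:RIFclarkmeas}, select a branch $g_{j_0}^\alpha$ through $(\lambda_1,\lambda_2)$, bound the weight below near $\lambda_1$, control the angle function by a Lipschitz estimate, and pass from continuous test functions to the indicator of $V_\epsilon$. One small point worth tightening: the nonvanishing of $\partial\phi/\partial z_2$ at $(\lambda_1,\lambda_2)$ is not really ``implicit in the denominator'' (the weight could in principle vanish at isolated points) --- the paper supplies the concrete reason, namely that genericity of $\alpha$ forces $\phi(\lambda_1,\cdot)$ to be a nonconstant finite Blaschke product, and such maps have nonzero derivative everywhere on $\mathbb{T}$.
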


\begin{proof} Our goal is to use the formula for $\sigma_\alpha$ from Theorem \ref{theorem:RIFclarkmeas}.  

We first need some preliminary facts. Note that since $\alpha$ is generic and $\phi(\lambda_1, \lambda_2) = \alpha$, then the one-variable functions $\phi(\cdot, \lambda_2)$ and $ \phi(\lambda_1, \cdot )$ cannot be constant. Since non-constant finite Blaschke products have non-zero derivatives on $\mathbb{T}$, we have $\frac{\partial \phi}{ \partial z_j}(\lambda_1, \lambda_2) \ne 0$ for $j=1,2$. By continuity, for $\epsilon >0$ sufficiently small, there is an $m>0$ such that $|\frac{\partial \phi}{\partial z_2}(z_1,z_2)| <m$ for all $(z_1, z_2) \in V_\epsilon(\lambda_1, \lambda_2)$.

As described in Subsection \ref{subsec:RIFClark}, $\mathcal{C}_\alpha$  is globally parameterized by a finite family of analytic functions on $\mathbb{T}^2$. However, the implicit function theorem implies that  $\mathcal{C}_\alpha$ is actually parameterized by a single analytic function $z_2 = g_\alpha(z_1)$ in a neighborhood of $(\lambda_1, \lambda_2)$.  It is also worth noting that $g_\alpha'(\lambda_1) \ne 0$. To see this, observe that 
\[ \phi(z_1, g_\alpha(z_1)) = \alpha \text{ implies that } \tfrac{\partial \phi}{\partial z_1}(\lambda_1, \lambda_2) +  \tfrac{\partial \phi}{\partial z_2}(\lambda_1, \lambda_2) g'_\alpha(\lambda_1) =0.\]
Then since $\frac{\partial \phi}{\partial z_1}(\lambda_1, \lambda_2) \ne 0,$  $a:=g_\alpha'(\lambda_1) \ne 0$. Thus, near $\lambda_1,$
\begin{equation} \label{eqn:galpha} g_\alpha(z_1) = \lambda_2 + a (\lambda_1 - z_1) + O(|\lambda_1-z_1|^2).\end{equation}

Theorem \ref{theorem:RIFclarkmeas}  is technically stated only for continuous functions. So, we will apply Theorem \ref{theorem:RIFclarkmeas} to a continuous function $f_\epsilon$ that is always less than $1_{V_{\epsilon}(\lambda_1, \lambda_2)}$ and equals $1$ on $V_{\frac{\epsilon}{2}}(\lambda_1, \lambda_2)$.  For $\epsilon>0$ sufficiently small this gives
\[
\begin{aligned}
 \sigma_\alpha(V_{\epsilon}(\lambda_1, \lambda_2)) &\ge \int_{\mathbb{T}^2} f_\epsilon(z_1, z_2)  \ d \sigma_\alpha(z_1, z_2) \\
 &\ge \int_{\mathbb{T}}  1_{V_{\frac{\epsilon}{2}}(\lambda_1, \lambda_2)} (\zeta, g_\alpha(\zeta))  \frac{1}{|\frac{\partial \phi}{\partial z_2} (\zeta, g_\alpha(\zeta))|} dm(\zeta) \\
 & \ge \frac{1}{m}   \int_{\mathbb{T}}  1_{V_{\frac{\epsilon}{2}}(\lambda_1, \lambda_2)} (\zeta, g_\alpha(\zeta)) \ dm(\zeta) \\
 & = \frac{1}{2\pi m} \int_{-\pi}^{ \pi} 1_{V_{\frac{\epsilon}{2}}(\lambda_1, \lambda_2)} (e^{i\theta}, g_\alpha(e^{i\theta})) \ d \theta.
 \end{aligned}
 \]
 Observe that $1_{V_{\frac{\epsilon}{2}}(\lambda_1, \lambda_2)} (e^{i\theta}, g_\alpha(e^{i\theta})) =1$ if and only if both of the following are satisfied:
 \[
 \begin{aligned}
 \tau_1 - \tfrac{\epsilon}{4} &\le \theta \le  \tau_1+  \tfrac{\epsilon}{4}, \\
 \tau_2 - \tfrac{\epsilon}{4} &\le \text{Arg}(g_\alpha(e^{i\theta})) \le  \tau_2 +  \tfrac{\epsilon}{4}.  
 \end{aligned}
 \]
 Furthermore, note that by \eqref{eqn:galpha} and the typical relationship between a unimodular constant and its principal argument, we have 
 \[ |  \text{Arg}(g_\alpha(e^{i\theta})) - \tau_2| \approx |g_\alpha(e^{i\theta}) - \lambda_2| \approx |e^{i\theta} - \lambda_1 | \approx | \theta - \tau_1|.\]
 Then there exists a $b_1 >0$ so that 
 \[  | \theta - \tau_1| < b_1 \epsilon  \text{ implies that } |  \text{Arg}(g_\alpha(e^{i\theta})) - \tau_2| < \tfrac{\epsilon}{4}.\]
 Setting $b = \min\{ \frac{1}{4}, b_1\}$, we can see that if $|\tau_1-\theta| < b \epsilon$, then \[1_{V_{\frac{\epsilon}{2}}(\lambda_1, \lambda_2)} (e^{i\theta}, g_\alpha(e^{i\theta})) =1.\]
 This implies
\[
 \sigma_\alpha(V_{\epsilon}(\lambda_1, \lambda_2)) \ge  \frac{1}{2\pi m} \int_{\tau_1 - b \epsilon}^{\tau_1 + b \epsilon} 1 \ d\theta =  \frac{b \epsilon}{\pi m},\]
 which completes the proof.
 \end{proof}

This tells us about the joint spectrum of unitary perturbations of the compressed shifts $(S_\phi^1, S_\phi^2).$

\begin{thm}\label{thm:levsetsasTaylorspec}  Let $\phi=\frac{\tilde{p}}{p}$ be a bidegree $(n_1,n_2)$ RIF with $n_1,n_2\geq 1$ and let $\alpha \in \T$ be generic for $\phi$. Then there is a commuting pair of unitary perturbations $(U_\alpha^1, U_\alpha^2)$ of the compressed shifts $(S_\phi^1, S_\phi^2)$ on $K_\phi$ such that their Taylor joint spectrum $\tay(U_\alpha^1, U_\alpha^2) = \mathcal{C}_\alpha$.
\end{thm}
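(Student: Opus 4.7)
The plan is to simply assemble the pieces that have been carefully developed through the paper. By Corollary \ref{cor:RIFUalpha}, since $\alpha$ is generic for the RIF $\phi$, the Clark embedding $J_\alpha \colon K_\phi \to L^2(\sigma_\alpha)$ is unitary and the function $\abf{1}_\alpha$ (together with its symmetric analog $\abf{2}_\alpha$) is bounded on $\mathbb{D}^2$. Hence the formulas for the Clark unitaries
\[ U_\alpha^j = S_\phi^j + P_\phi P_{H^2_j} M_{\overline{\abf{j}_\alpha}}, \quad j=1,2,\]
define bounded operators on $K_\phi$ which are unitary perturbations of the compressed shifts $S_\phi^j$ and satisfy $U_\alpha^j = J_\alpha^* M_{z_j} J_\alpha$ for $j=1,2$.

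Because $M_{z_1}$ and $M_{z_2}$ commute on $L^2(\sigma_\alpha)$ and $J_\alpha$ is unitary, the pair $(U_\alpha^1, U_\alpha^2)$ is a commuting pair on $K_\phi$, and moreover is unitarily equivalent (hence in particular jointly similar) to the pair $(M_{z_1}, M_{z_2})$ acting on $L^2(\sigma_\alpha)$.

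Invoking Proposition \ref{prop:taybasics}(iii), the Taylor joint spectrum is invariant under joint similarity, so
\[ \tay(U_\alpha^1, U_\alpha^2) = \tay(M_{z_1}, M_{z_2}),\]
where the right-hand side is computed on $L^2(\sigma_\alpha)$. Finally, Corollary \ref{cor:RIFspectrum} identifies the latter joint spectrum with $\mathcal{C}_\alpha$, and the theorem follows.

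The real work in this argument has already been done elsewhere in the paper: Corollary \ref{cor:RIFUalpha} builds the unitary perturbations and witnesses their unitary equivalence to the coordinate multiplications, while Corollary \ref{cor:RIFspectrum} (which depended on the level-set measure-theoretic estimates in Lemma \ref{lem:genericepsilon} and the general Taylor-spectrum-support equality of Theorem \ref{thm:jointspectrum}) delivers the spectrum computation on the $L^2(\sigma_\alpha)$ side. The only thing left is to invoke the similarity-invariance of the Taylor spectrum, which is essentially bookkeeping. I would not expect any serious obstacle at this stage of the argument.
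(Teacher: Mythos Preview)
Your proof is correct and follows essentially the same route as the paper's own argument: invoke Corollary~\ref{cor:RIFUalpha} to build the $U_\alpha^j$ and conjugate them to $(M_{z_1},M_{z_2})$, use commutativity of the multipliers to get a commuting pair, then apply Corollary~\ref{cor:RIFspectrum} together with similarity-invariance of the Taylor spectrum. One small slip: in your displayed formula the projection should be $P_{H^2_{3-j}}$ rather than $P_{H^2_j}$ (that is, $P_{H^2_2}$ appears in $U_\alpha^1$ and $P_{H^2_1}$ in $U_\alpha^2$), but this does not affect the argument.
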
 

\begin{proof} First, we show that on $L^2(\sigma_\alpha)$, we have $\tay(M_{z_1}, M_{z_2}) = \mathcal{C}_\alpha$. 
By Theorem \ref{theorem:RIFclarkmeas}, the set $\mathcal{C}_\alpha$ is the support of $\sigma_\alpha$. To apply Theorem \ref{thm:jointspectrum}, note that $\mathcal{C}_\alpha$ is the closure of the set 
\[ S_\alpha = \{ (\lambda_1, \lambda_2) \in \mathbb{T}^2: \phi \text{ is holomorphic at } (\lambda_1, \lambda_2) \text { and } \phi(\lambda_1, \lambda_2) =\alpha\}.\] 
By Lemma \ref{lem:genericepsilon}, if $(\lambda_1, \lambda_2) \in S_\alpha$, there is a $c>0$ such that for $\epsilon>0$ sufficiently small,
\[ \sigma_\alpha ( V_\epsilon (\lambda_1, \lambda_2) ) \ge c \epsilon \ge c \epsilon^2,\]
as needed. Then $\tay(M_{z_1}, M_{z_2}) = \mathcal{C}_\alpha$ follows immediately from Theorem \ref{thm:jointspectrum}.

To move this result to the model space $K_\phi$, note that Corollary \ref{cor:RIFUalpha} implies that $J_\alpha$ is unitary and there are unitary maps $U_\alpha^1, U_\alpha^2: K_\phi \rightarrow K_\phi$ such that 
$U^j_\alpha = J_\alpha^* M_{z_j} J_\alpha$ for $j=1,2$. Since $M_{z_1},  M_{z_2}$ commute on $L^2(\sigma_\alpha)$, the operators $U_\alpha^1, U_\alpha^2$ commute on $K_\phi$. 
Moreover, 
\[ 
\begin{aligned}
U^1_\alpha  &= S_\phi^1  + P_\phi P_{H^2_2} M_{\overline{\abf{1}_\alpha}} \\
U^2_\alpha  &= S_\phi^2  + P_\phi P_{H^2_1} M_{\overline{\abf{2}_\alpha}}, 
\end{aligned}
\]
where $\abf{2}_\alpha(z) = \bar{\alpha} \frac{(B_2\phi)(z)}{1- \bar{\alpha}\phi(z_1, 0)}$ and $B_2$ denotes the backward shift in $z_2$. (As an aside, while Corollary \ref{cor:RIFUalpha} only states the information about $U_\alpha^1$, the proof immediately implies the corresponding information about $U_\alpha^2$.) Thus, the operators $U^1_\alpha, U^2_\alpha$ are commuting unitary perturbations of the compressed shifts $S_\phi^1, S_\phi^2$. By above, $\tay(M_{z_1}, M_{z_2}) = \mathcal{C}_\alpha$. Since the Taylor joint spectrum is invariant under unitary conjugation, this gives $\tay(U_\alpha^1, U_\alpha^2) = \mathcal{C}_\alpha$.
\end{proof}

\begin{rem*}
There are several techniques for constructing rational inner functions $\phi$ whose level sets $\mathcal{C}_{\alpha}$ have prescribed properties, see for instance \cite[Section 7]{BPSII}. By Theorem \ref{thm:levsetsasTaylorspec}, this in turn allows us to prescribe such curve subsets of $\mathbb{T}^2$ as Taylor spectra of natural pairs of commuting unitaries on model spaces.
\end{rem*}
\section{Examples}\label{sec:examples}

\subsection{Finite Blaschke Products} \label{ex:Bp} In this subsection, we use inner functions constructed from finite Blaschke products to explore our various results.  Specifically, let $\phi_1$ and $\phi_2$ be finite Blaschke products (with at least one nonconstant) and consider $\phi(z) = \phi(z_1) \phi(z_2)$. 

\begin{example} First, let us  assume that one of $\phi_1$ or $\phi_2$ is constant, so that $\phi$ is a function of one variable. Then for each $\alpha \in \mathbb{T},$ the level set $\mathcal{C}_\alpha$ will contain lines of either the form $\mathbb{T} \times \{\tau\}$ or $\{\tau\} \times \mathbb{T}$ for some $\tau \in \mathbb{T}.$ Then basically by applying the argument in the proof of Proposition 3.10 in  \cite{BickelSolaI}, one can show that the isometry $J_\alpha$ linking $K_\phi$ and $L^2(\sigma_\alpha)$ is not unitary. Thus, our machinery breaks down and we cannot apply Theorem \ref{thm:U1alpha} to obtain a unitary perturbation of $S_\phi^1.$

Indeed, if $\phi_1$ is constant, then 
\[ K_\phi = \mathcal{H}\left( \frac{1-\phi_2(z_2)\overline{\phi_2(w_2)}}{(1-z_2 \bar{w}_2)(1-z_1 \bar{w}_1)} \right) = K^2_{\phi_2} \otimes H^2_1(\mathbb{D}), \]
where $K^2_{\phi_2}$ is the one-variable model space associated to $\phi_2$ with variable $z_2$. By this formula, it is immediate that $K_\phi$ is invariant under the $z_1$-shift $T_1$ and so, $S_\phi^1 = T_1 |_{K_{\phi}}$ is an isometry. Thus, in this case, $S_\phi^1$ strongly resembles the shift on $H^2(\mathbb{D})$. 
\end{example}

For this reason, it makes sense to restrict attention to $\phi_1, \phi_2$ that are both nonconstant. In this situation, since $\phi$ also has no boundary singularities, every $\alpha \in \mathbb{T}$ must be generic for $\phi$. From this,  we can apply Corollary \ref{cor:RIFUalpha} to obtain the formula 
\[U^1_\alpha  = S_\phi^1  + P_\phi P_{H^2_2} M_{\overline{\abf{1}_\alpha}} \]
for the associated perturbation of $S_\phi^1$ to the unitary $U_\alpha^1$.

\begin{example} If $\phi_1(0)=0$, then $\psi_1(z_1) := \phi_1(z_1)/z_1$ is also a finite Blaschke product and $\phi(z) = z_1 \psi_1(z_1) \phi_2(z_2)$ with
$\phi(0, z_2) \equiv 0$. In this case, we have a particularly nice formula for $U_\alpha^1.$ We can apply Corollary \ref{cor:crossformula} to conclude that 
\[ K_\phi = K_{\psi_1 \phi_2} \oplus \psi_1 \phi_2 H^2_2\]
and for each $f \in K_\phi$, we can write $f = f_1 + \psi_1 \phi_2 f_2$ for $f_1 \in K_{\psi_1 \phi_2} $ and $f_2 \in H^2_2.$ Then with respect to this decomposition, $U_\alpha^1$ is given by
\[ U_\alpha^1 f = z_1 f_1 + \alpha f_2.\]
Since $\| f \|_{K_\phi}^2 = \| f_1 \|_{H^2}^2 + \|f_2\|_{H^2}^2$, it is easy to see that $U_\alpha^1$ preserves norms. One can also see that it is surjective by considering the following alternative decomposition of $K_\phi:$ $K_\phi = H^2_2 \oplus z_1 K_{\psi_1 \phi_2}.$
\end{example}

Meanwhile, if $\phi_1(0) \ne 0$, the formula for $U_\alpha^1$ is more opaque. In the following example, we highlight some useful computations that shed some light on the formula.

\begin{example} Assume $\phi_1(0)\ne 0$. The product structure of $\phi$ still allows us to decompose $K_\phi$ as 
\[ K_\phi = K_{\phi_2} \oplus \phi_2 K_{\phi_1},\]
 where $K_{\phi_2}$ is invariant under $T_1,$ the shift in $z_1$ and thus $K_{\phi_2} \subseteq S_1^{\mathrm{max}}$. By Remark \ref{rem:RIFthms}, $\phi$ satisfies the assumptions of Theorem \ref{thm:perturbissmall} and so,
$(U_\alpha^1 - S_\phi^1)|_{K_{\phi_2}} \equiv 0.$ 
Thus, fully understanding $U_\alpha^1$ would require us to more fully understand 
\[(U_\alpha^1 - S_\phi^1)|_{\phi_2 K_{\phi_1}} =  (P_\phi P_{H^2_2} M_{\overline{\abf{1}_\alpha}})|_{\phi_2 K_{\phi_1}}.\]
Initially, one might hope that this formula reduces to a simple adaptation of the one-variable unitary perturbation of $S_{\phi_1}$ on the one variable space $K_{\phi_1}^1.$ Unfortunately, this does not appear to be the case. Indeed, if $h \in K^1_{\phi_1}$ and $ g \in H^2(\mathbb{D})$, then
\[f(z)= \phi_2(z_2) h(z_1) g(z_2) \in \phi_2 K_{\phi_1},\]
which gives a nice class of functions on which to test $(U_\alpha^1 - S_\phi^1)|_{\phi_2 K_{\phi_1}}$. Then we have
\[ \begin{aligned}(U_\alpha^1 - S_\phi^1) f & = P_\phi P_{H^2_2} \overline{\abf{1}_\alpha} f \\
& = P_\phi P_{H^2_2} \left( \alpha \frac{\overline{(B_1\phi)(z)}}{1- \alpha\overline{\phi(0, z_2)}} f(z) \right)\\
& = \alpha  P_\phi P_{H^2_2}  \left(\frac{\overline{\phi_2(z_2)(B_1\phi_1)(z_1)}}{1- \alpha\overline{\phi_1(0)\phi_2( z_2)}} \phi_2(z_2) h(z_1) g(z_2)\right)  \\
& = \alpha  P_\phi P_{H^2_2} \left(\frac{\overline{(B_1\phi_1)(z_1)}}{1- \alpha\overline{\phi_1(0)\phi_2( z_2)}} h(z_1) g(z_2) \right). 
\end{aligned}
\]
Because of the involvement of $P_{H^2_2}$  and $g(z_2)$, this formula does not appear to nicely mimic the one-variable situation. Still, for some special functions, we can compute parts of this formula more directly. For example, if we set $h = B_1 \phi_1$ and $g=1$ to get
\[ F(z) : =\phi_2(z_2) (B_1 \phi_1) (z_1) \in \phi_2 K_{\phi_1}, \]
then
\[   M_{\overline{\abf{1}_\alpha}} F := \alpha \frac{\overline{(B_1\phi)(z)}}{1- \alpha\overline{\phi(0, z_2)}}\phi_2(z_2) (B_1 \phi_1) (z_1) = \alpha \frac{\overline{(B_1\phi_1)(z_1)}(B_1 \phi_1) (z_1)}{1- \alpha\overline{\phi_1(0)\phi_2(z_2)}}.\]
If we expand this as a Fourier series in $L^2,$ we can see that there are no terms with positive $z_2$ power. That means that the projection of this function onto $H^2_2$ must be a constant function. One can compute this directly as:
\[
\begin{aligned} 
& \alpha \left \langle \frac{\overline{(B_1\phi_1)(z_1)}(B_1 \phi_1) (z_1)}{1- \alpha\overline{\phi_1(0)\phi_2(z_2)}}, 1 \right \rangle_{L^2} \\
&=  \alpha \left \langle \overline{(B_1\phi_1)(z_1)}(B_1 \phi_1) (z_1), 1\right \rangle_{L^2} \left \langle \frac{1}{1- \alpha\overline{\phi_1(0)\phi_2(z_2)}},  1 \right \rangle_{L^2} \\ 
& = \| B_1\phi_1 \|^2_{L^2} \frac{\alpha }{1- \alpha\overline{\phi_1(0)\phi_2(0)}} \ne 0.
\end{aligned}
\]
Thus,
\[ P_\phi P_{H^2_2} \overline{\abf{1}_\alpha} F  = P_\phi\left( \| B_1\phi_1 \|^2_{L^2} \frac{\alpha }{1- \alpha\overline{\phi_1(0)\phi_2(0)}} \right). \]
If we are in a situation where the constant functions are not in $K_\phi$ (for example if $\phi_2(0)\ne 0$ as well), then this gives an example where $P_\phi$ is needed to ensure that this operator maps back into $K_\phi.$
 \end{example}

\subsection{A Rational inner function}
Consider the rational inner function
\begin{equation}\label{eq:fave}
\phi(z)=\frac{2z_1z_2-z_1-z_2}{2-z_1-z_2},
\end{equation}
which has a singularity at $(1,1)$ and often serves as an important example in RIF theory (e.g. \cite[Section 3]{Kne15}, \cite[Example 1]{PLMS}, and \cite[Example 5.1]{BickelSolaI}).

Note that $\phi(1,z_2)=\phi(z_1,1)=-1$ so that $\alpha=-1$ is an exceptional value of this RIF. In fact, since the level sets of $\phi$ for other $\alpha$ can be parameterized as
\[\mathcal{C}_{\alpha}=\left\{ \left(\zeta_1, \frac{2\alpha-\alpha\zeta_1+\zeta_1}{2\zeta_1-1+\alpha}\right)\colon \zeta_1 \in \T\right\},\]
this is the sole exceptional value of this function.  A collection of level sets is graphed in Figure \ref{fig:favesupport}.  The Clark measures of $\phi$ are described for $\alpha\neq -1$ by 
\[\int_{\T^2}f(\zeta)d\sigma_{\alpha}=\int_{\T}f\left(\zeta_1, \frac{2\alpha-\alpha\zeta_1+\zeta_1}{2\zeta_1-1+\alpha}\right)\frac{2|\zeta_1-1|^2}{|2\zeta_1-1+\alpha|^2}dm(\zeta_1), \,\,\, f \in C(\T^2),\]
and by Theorem \ref{theorem:poltoratski}, the Clark embedding operator $J_{\alpha}\colon K_{\phi}\to L^2(\sigma_{\alpha})$ is unitary. 
Note that while $\bigcap_{\alpha\in \T}\mathrm{supp}\,\sigma_{\alpha}=\{(1,1)\}$, we have $\sigma_{\alpha}(\{1,1\})=0$ for each $\alpha$, so there is no contradiction to mutual singularity of
the Clark measures. 

\begin{figure}
\includegraphics[width=0.45 \textwidth]{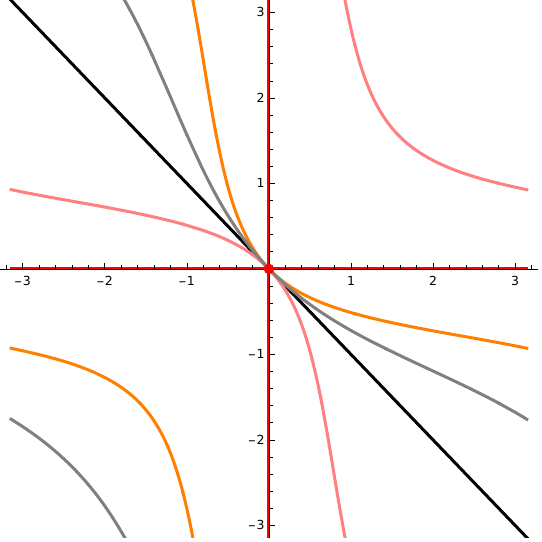}
\caption{  Support sets  in $\mathbb{T}^2\simeq [-\pi, \pi)^2$ for the Clark measures $\sigma_{\alpha}$  for $\phi=\frac{2z_1z_2-z_1-z_2}{2-z_1-z_2}$ and $\alpha=1$ (black), $\alpha=e^{i\frac{\pi}{4}}$ (gray), $\alpha=e^{i\frac{\pi}{2}}$ (orange), $\alpha=e^{-i\frac{\pi}{2}}$ (pink), and the exceptional value $\alpha=-1$ (red). Singular point $(1,1)\simeq (0,0)$ marked in red.}
 \label{fig:favesupport}
\end{figure}

In addition to the level sets of \eqref{eq:fave} containing straight lines in $\mathbb{T}^2$, 
the parameter value $\alpha=-1$ also leads to degeneration in the associated holomorphic functions
\[\abf{1}_{\alpha}(z)=\frac{2\bar{\alpha}}{(1-\bar{\alpha})z_2-2}\cdot\frac{(z_2-1)^2}{2-z_1-z_2}.\]
and $\abf{2}_{\alpha}(z)$.
In the exceptional case, we get
\[\abf{1}_{-1}(z)=\frac{1-z_2}{2-z_1-z_2}\]
which is no longer a bounded rational function in $\D^2$, viz. \cite[p. 1265]{Kne15} (or see \cite[Theorem 1.2]{BKPS} for a more general explanation). See \cite[Example 5.1]{BickelSolaI} for further details including a discussion of what happens in the exceptional case $\alpha=-1$.

The value $\alpha=1$ is generic, and we have
\[\abf{1}_1(z)=-\frac{(z_2-1)^2}{2-z_1-z_2}.\]
The structure of the corresponding Clark measure also simplifies, and we have
\[\int_{\mathbb{T}^2}fd\sigma_1=\int_{\mathbb{T}}f(\zeta_1,\bar{\zeta}_1)\frac{1}{2}|1-\zeta_1|^2dm(\zeta_1).\]
Since $\phi(0,z_2)\neq 0$ and $\phi(z_1,0)\neq 0$, Proposition \ref{prop:ptheta} shows that a projection is needed in the formulas for the unitary operators $U_{1}^1, U_1^2$. By appealing to Agler decompositions, which were discussed in Subsection \ref{aglerdec}, we can nevertheless work out a more concrete representation for the Clark unitaries for $\alpha=1$.

First, we note that since $\deg \phi = (1,1)$ and $\phi$ has a singularity at $(1,1),$ Lemma 4.1 in \cite{BickelGorkin17} implies that $\phi$ has a unique Agler decomposition given by
\begin{equation}
1-\overline{\phi(w)}\phi(z)=
(1-\bar{w}_1z_1)g(z)\overline{g(w)}+(1-\bar{w}_2z_2)h(z)\overline{h(w)},
\label{faveagler}
\end{equation}
where
\[g(z)=\frac{\sqrt{2}(1-z_2)}{2-z_1-z_2} \quad \textrm{and}\quad h(z)=\frac{\sqrt{2}(1-z_1)}{2-z_1-z_2}.\]
Now set
\[ K_1(z,w) = h(z)\overline{h(w)} \ \ \text{ and } \ \ K_2( z, w) = g(z) \overline{ g(w)}.\]
By the uniqueness of the Agler kernels for this $\phi$, it must be the case that $\mathcal{H}(K_1), \mathcal{H}(K_2)$ are closed subspaces of $H^2$ (where the reproducing kernel norm agrees with the $H^2$ norm) 
and thus, $\{h\}$ is an orthonormal basis for $\mathcal{H}(K_1)$ and $\{g\}$ is an orthonormal basis for $\mathcal{H}(K_2)$. Meanwhile, as mentioned in Subsection \ref{aglerdec}, we have the orthogonal decomposition
\[S_1^{\mathrm{max}}=\bigoplus_{k=0}^{\infty}z_1^k\mathcal{H}(K_1) \quad \textrm{and}\quad S_2^{\mathrm{min}}=\bigoplus_{l=0}^{\infty}z_2^l\mathcal{H}(K_2).\]
This implies that $\{z_1^k h\}_{k=0}^{\infty}$ and $\{z_2^lg\}_{l=0}^{\infty}$
are orthonormal bases for $S_1^{\mathrm{max}}$ and $S_2^{\mathrm{min}}$ respectively. We also note that
\[\vartheta_1^1(z)=-\frac{1-z_2}{\sqrt{2}}g(z).\]

We can now determine the action of $U^1_1$ on the orthonormal basis of $S_2^{\mathrm{min}}$. First, we compute, using 
$\|g\|_{H^2}=1$ and $g\perp z_2^kg$, that for $w \in \mathbb{D}^2,$
\begin{align}\notag
\left(P_{H^2_2}M_{\overline{\vartheta_1^1}}gz_2^n\right)(w)&=\left\langle\overline{\vartheta_1^1(z)}g(z)z_2^n ,\frac{1}{1-\bar{w}_2z_2} \right\rangle_{L^2}\\\notag&=\left\langle g(z)z_2^n, \frac{\vartheta_1^1(z)}{1-\bar{w}_2z_2}\right\rangle_{H^2} \\ \notag&=\sum_{k=0}^{\infty}\frac{1}{\sqrt{2}}\langle g(z)z_2^n, (z_2-1)g(z)z_2^k\rangle_{H^2}\, w_2^k\\ \notag&=\frac{1}{\sqrt{2}}\sum_{k=0}^{\infty}(\langle gz_2^{n-k-1},g \rangle_{H^2}-\langle gz_2^{n-k},g, \rangle_{H^2})w_2^k\\ &= \left\{\begin{array}{cc}-\frac{1}{\sqrt{2}}\,, & n=0,\\ \frac{1}{\sqrt{2}}(w_2^{n-1}-w_2^n),& n>0.\end{array} \right.\label{firstproj}
\end{align}
From this we see that $\mathrm{Ran}(P_{H_2^2}M_{\overline{\vartheta_1^1}})\mid_{S_2^{\mathrm{min}}}=H_2^2$. It remains to apply the model space projection $P_{\phi}$ to \eqref{firstproj}. In order to do this, we first use \eqref{faveagler} to get
\[
\begin{aligned}
P_{\phi}(z_2^k)(w)&=
\left\langle z_2^k, k_w^{\phi}\right\rangle_{H^2}\\
&=\left\langle z_2^k, \frac{1-\overline{\phi(w)}\phi(z)}{(1-\bar{w}_1z_1)(1-\bar{w}_2z_2)}\right 
\rangle_{H^2}\\
&=\left\langle z_2^k, \frac{\overline{g(w)}g(z)}{1-\bar{w}_2z_2}+\frac{\overline{h(w)}h(z)}{1-\bar{w}_1z_1} \right\rangle_{H^2}\\ & =\sum_{l=0}^{k}\langle z_2^k, g(z)z_2^l \rangle_{H^2}w_2^lg(w)
+\langle z_2^k,h\rangle_{H^2}h(w);
\end{aligned}
\]
note that the second inner product reduces to one term since powers of $z_1$ and $z_2$ are orthogonal. We now obtain
\begin{align*}
(P_{\phi}P_{H^2_2}M_{\overline{\vartheta_1^2}}g)(w)&=P_{\phi}\left(-\frac{1}{\sqrt{2}}\right)\\
&=-\frac{1}{\sqrt{2}}\left(\langle 1, g\rangle_{H^2} g(w)+\langle 1, h\rangle_{H^2} h(w)\right)\\ & =-\frac{1}{\sqrt{2}}(\overline{g(0)}g(w)+\overline{h(0)}h(w))
\end{align*}
and for $n \ge 1$, 
\begin{align*}
(P_{\phi}P_{H^2_2}M_{\overline{\vartheta_1^2}}gz_2^n)(w)&=\frac{1}{\sqrt{2}}P_{\phi}\left(z_2^{n-1}-z_2^{n}\right)\\
&=\frac{1}{\sqrt{2}}\left(\sum_{l=0}^{n-1}\langle z_ 2^{n-1-l}-z_2^{n-l},g \rangle_{H^2} w_2^l-\overline{g(0)}w_2^n\right)g(w)\\& \quad+\frac{1}{\sqrt{2}}\langle z_2^{n-1}-z_2^n, h \rangle_{H^2}h(w).
\end{align*}
On the other hand, $P_{\phi}P_{H^2_2}M_{\overline{\vartheta_1^2}}|_{S_1^{\mathrm{max}}}=0$ as guaranteed by Theorem \ref{thm:perturbissmall}.

As this relatively simple example illustrates, it does not seem to be easy to find a more concrete representation of Clark unitaries in the case when the underlying inner function has $\phi(0,z_2)\neq 0$ and $\phi(z_1,0)\neq 0$.
\subsection*{Acknowledgements}
The authors thank M. Hartz, N. Jacobsson, M. Jury, E. Krusell, and S. Treil for very helpful discussions and remarks. 

Part of this work was carried out at Mathematisches Forschungsinstitut Oberwolfach and and Centre International de Rencontres Math\'ematiques (CIRM). The authors are grateful to these institutes and their staff for providing such excellent conditions for collaborative mathematical research.

\providecommand{\bysame}{\leavevmode\hbox to3em{\hrulefill}\thinspace}
\providecommand{\MR}{\relax\ifhmode\unskip\space\fi MR }
\providecommand{\MRhref}[2]{%
	\href{http://www.ams.org/mathscinet-getitem?mr=#1}{#2}
}
\providecommand{\href}[2]{#2}

\end{document}